\documentclass[11pt,usenames,dvipsnames]{article}

\usepackage[english]{babel}
\usepackage{amsmath,amsthm,amssymb,bm,enumitem,mathrsfs,mathtools,thmtools,titlesec,tikz-cd,url}
\usepackage[left=2.4cm,right=2.4cm,top=2.4cm,bottom=2.4cm,bindingoffset=0cm]{geometry}

\usepackage[T1]{fontenc}
\usepackage{xcolor}
\usepackage{hyperref}
\hypersetup{colorlinks=true,citecolor=Blue,linkcolor=Sepia,urlcolor=Blue}

\usepackage{quoting}
\quotingsetup{vskip=0pt}

\setlist[enumerate]{topsep=2pt,label=\textup{(\arabic*)},leftmargin=2em,labelsep=.5em}
\setlist{noitemsep}

\declaretheoremstyle[
  spaceabove=\topsep, spacebelow=2ex,
  headfont=\normalfont\bfseries,
  notefont=\mdseries, notebraces={(}{)},
  bodyfont=\normalfont\itshape,
  postheadspace=.5em,
  qed=\qedsymbol
]{mystyle}

\declaretheoremstyle[
  spaceabove=\topsep, spacebelow=6pt,
  headfont=\normalfont\bfseries,
  notefont=\mdseries, notebraces={(}{)},
  bodyfont=\normalfont,
  postheadspace=.5em,
  qed=\qedsymbol
]{mydefstyle}

\theoremstyle{mystyle}
\declaretheorem[numberlike=subsection]{proposition}
\declaretheorem[numberlike=subsection]{theorem}
\declaretheorem[numberlike=subsection]{corollary}
\declaretheorem[numberlike=subsection]{lemma}

\theoremstyle{mydefstyle}
\declaretheorem[numberlike=subsection]{definition}
\declaretheorem[numberlike=subsection]{remark}

\numberwithin{equation}{subsection}

\titleformat{\section}[block]
  {\filcenter\normalfont\large\bfseries}{\thesection.}{.5em}{}
\titleformat{\subsection}[runin]
  {\normalfont\bfseries}{\thesubsection.}{.5em}{}

\titlespacing*{\section}{0pt}{5ex plus .2ex minus 1ex}{3ex plus .2ex minus 1ex}
\titlespacing*{\subsection}{0pt}{2ex plus .2ex minus 1ex}{.5em}


\newcommand{\bbF}{\mathbb{F}}

\newcommand{\bbN}{\mathbb{N}}

\newcommand{\bbQ}{\mathbb{Q}}

\newcommand{\bbZ}{\mathbb{Z}}

\newcommand{\cA}{\mathscr{A}}

\newcommand{\cF}{\mathscr{F}}

\newcommand{\cH}{\mathscr{H}}

\newcommand{\cL}{\mathscr{L}}

\newcommand{\cO}{\mathscr{O}}
\newcommand{\cP}{\mathscr{P}}

\newcommand{\cS}{\mathscr{S}}
\newcommand{\cT}{\mathscr{T}}

\newcommand{\cX}{\mathscr{X}}


\newcommand{\fsl}{\mathfrak{sl}}
\newcommand{\fgl}{\mathfrak{gl}}


\newcommand{\sF}{\mathsf{F}}


\newcommand{\CH}{\mathrm{CH}}

\newcommand{\Image}{\mathrm{Im}}

\newcommand{\Ker}{\mathrm{Ker}}

\newcommand{\Pic}{\mathrm{Pic}}

\newcommand{\Spec}{\mathrm{Spec}}
\newcommand{\St}{\mathrm{St}}

\newcommand{\Sym}{\mathrm{Sym}}

\newcommand{\ch}{\mathrm{ch}}

\newcommand{\dR}{\mathrm{dR}}
\newcommand{\id}{\mathrm{id}}
\newcommand{\inv}{\mathrm{inv}}

\newcommand{\pr}{\mathrm{pr}}

\newcommand{\Td}{\mathrm{Td}}
\newcommand{\fs}{\mathfrak{s}}
\newcommand{\fTd}{\mathfrak{Td}}
\newcommand{\fCT}{\mathfrak{CT}}

\newcommand{\tto}{\longrightarrow}
\newcommand{\isomarrow}{\xrightarrow{\,\sim\, }}

\let\epsilon=\varepsilon
\let\phi=\varphi



\begin{document}

\begin{center}
\textbf{\LARGE Integral aspects of Fourier duality for abelian varieties}
\bigskip

\textit{by}
\bigskip

{\Large Junaid Hasan, Hazem Hassan\footnote{Partially supported by a Graduate Student Scholarship of the Institut des Sciences Math\'ematiques au Qu\'ebec.}, Milton Lin,}
\medskip

{\Large Marcella Manivel\footnote{Supported by the NSF Graduate Research Fellowship Program, Award \# 2237827}, Lily McBeath\footnote{Supported by NSF grant DMS-2200845}, and Ben Moonen}
\end{center}
\vspace{8mm}

{\small 

\noindent
\begin{quoting}
\textbf{Abstract.} We prove several results about integral versions of Fourier duality for abelian schemes, making use of Pappas's work on integral Grothendieck--Riemann--Roch. If $S$ is smooth quasi-projective of dimension~$d$ over a field and $\pi \colon X\to S$ is a $g$-dimensional abelian scheme, we prove, under very mild assumptions on~$X/S$, that all classical results about Fourier duality, including the existence of a  Beauville decomposition, are valid for the Chow ring $\CH(X;\Lambda)$ with coefficients in the ring $\Lambda = \bbZ[1/(2g+d+1)!]$. If $X$ admits a polarization~$\theta$ of degree~$\nu(\theta)^2$ we further construct an $\fsl_2$-action on $\CH(X;\Lambda_\theta)$ with $\Lambda_\theta = \Lambda[1/\nu(\theta)]$, and we show that $\CH(X;\Lambda_\theta)$ is a sum of copies of the symmetric powers $\Sym^n(\St)$ of the $2$-dimensional standard representation, for $n=0,\ldots,g$. For an abelian variety over an algebraically closed field, we use our results to produce torsion classes in~$\CH^i(X;\Lambda_\theta)$ for every $i\in \{1,\ldots,g\}$.
\medskip

\noindent
\textit{AMS 2020 Mathematics Subject Classification:\/} 14C15, 14K05
\end{quoting}

} 
\vspace{4mm}

\section{Introduction}

\subsection{} A powerful tool in the study of algebraic cycles on abelian varieties is the Fourier transform, which first appeared in Mukai's seminal paper~\cite{Mukai}, and which on the level of Chow groups was studied by Beauville in his influential papers \cite{BeauvFourier} and~\cite{BeauvChow}. Arguably the main drawback of the Fourier transform on the Chow ring is that it works with rational coefficients, so that we lose all information about torsion classes.

Already in Beauville's paper~\cite{BeauvFourier}, it is observed that for an abelian variety~$X$ with dual~$X^t$, one can define homomorphisms $\sF \colon \CH(X) \to \CH(X^t)$ and $\sF^t \colon \CH(X^t) \to \CH(X)$ on integral Chow rings that lift $N$ times the classical Fourier transforms, for some positive integer~$N$, and that have all the expected properties. However, the value of~$N$ that one can take (which will depend on $g=\dim(X)$) is not made effective. If one attempts to make this effective, there are two points that need to be considered. The first is that the classical Fourier transform $\cF\colon \CH(X;\bbQ) \to \CH(X^t;\bbQ)$ is defined as the homomorphism that is induced by the correspondence $\ch(\cP) \in \CH(X\times X^t;\bbQ)$, where $\cP$ is the Poincar\'e bundle on $X\times X^t$. To get rid of the denominators that appear, it suffices to multiply~$\ch(\cP)$ by a factor of~$(2g)!$. The second point, which is harder to control, is that the proof of the basic properties of~$\cF$, such as the fact that $\cF^t \circ \cF = (-1)^g \cdot [-1]^*$, relies in an essential way on the Grothendieck--Riemann--Roch theorem, which involves denominators.

The present paper, which finds its origin in a project at the Arizona Winter School 2024, is based on the idea that one can use the results of Pappas~\cite{Pappas} to obtain, for a given abelian variety~$X$, an effective number~$N>0$ such that $N$~times the classical Fourier transform can be lifted to an integral transformation with all the expected properties. We carry this out in the more general setting of abelian schemes $\pi\colon X\to S$ over a base scheme~$S$ that is smooth and quasi-projective over a field~$L$. As the results of Pappas are available only in characteristic~$0$, we have to make some assumptions on~$X/S$, but by using liftings to characteristic~$0$, we are able to prove our main results under some very mild assumptions (see Section~\ref{subsec:charL}).

Let us point out here that for a $g$-dimensional abelian scheme $\pi\colon X\to S$, the duality relation that, in some sense, comes first, is that for a class $\alpha \in \CH(X;\bbQ)$ one has
\begin{equation}\label{eq:FtFTodd}
(\cF^t\circ \cF)\bigl(\alpha\bigr) \cdot \pi^*\bigl(\Td(\cH)\bigr) = (-1)^g \cdot [-1]^*(\alpha)\, ,
\end{equation}
where $\cH = R^1\pi_*(\Omega^\bullet_{X/S})$ is the de Rham bundle, which is a vector bundle of rank~$2g$ on~$S$. The more familiar relation $(\cF^t\circ \cF)\bigl(\alpha\bigr) = (-1)^g \cdot [-1]^*(\alpha)$ was proven by Deninger and Murre in~\cite{DenMurre} by using more elaborate arguments, and it then follows from their results that in fact $\Td(\cH) =1$. (See~\cite{BM-RPDM} for further discussion.)

\subsection{}
The first main result we prove in the paper is Theorem~\ref{thm:FtF} in the text. We consider an abelian scheme $\pi\colon X\to S$, and we let $d=\dim(S)$ and $g = \dim(X/S)$. As mentioned, we need to impose some mild technical assumptions; e.g., it suffices if $X$ admits a separable polarization and a level~$n$ structure with $n\geq 3$ and $\mathrm{char}(L) \nmid n$. We then construct integral transformations $\sF \colon \CH(X) \to \CH(X^t)$ and $\sF^t \colon \CH(X^t) \to \CH(X)$ that lift $(2g+d)!$ times the classical Fourier transforms, such that an integral version of the relation~\eqref{eq:FtFTodd} holds.

Our second main theorem says that essentially all classical results due to Beauville remain valid for Chow groups with coefficients in the ring $\Lambda = \bbZ[\frac{1}{(2g+d+1)!}]$. Writing $\CH(X;\Lambda) = \CH(X) \otimes \Lambda$, we define transformations
\[
\cF \colon \CH(X;\Lambda) \to \CH(X^t;\Lambda)\, ,\qquad \cF^t \colon \CH(X^t;\Lambda) \to \CH(X;\Lambda)
\]
that lift the classical Fourier transforms, and we show that $\cF^t\circ \cF = (-1)^g \cdot [-1]^*$ and that $\Td(\cH) = 1$ in~$\CH(S;\Lambda)$. Moreover, if we define
\[
\CH^i_{(s)}(X;\Lambda) = \bigl\{x\in \CH^i(X;\Lambda) \bigm| [n]_X^*(x) = n^{2i-s} \cdot x\ \text{for all $n\in \bbZ$} \bigr\} \subset \CH^i(X;\Lambda)\, ,
\]
we prove that there is a Beauville decomposition
\[
\CH^i(X;\Lambda) = \bigoplus_{s=\max\{i-g,2i-2g\}}^{\min\{i+d,2i\}}\; \CH^i_{(s)}(X;\Lambda)
\]
with all the usual properties. This decomposition is compatible with the intersection product and the Pontryagin product, and if $Y/S$ is another abelian scheme of dimension~$\leq g$, we have the expected compatibility of all constructions with $f^*$ and~$f_*$. We refer to Theorem~\ref{thm:MainLambda} for the precise statements.

\subsection{}
Going even further, we also construct an `integral' version of the $\fsl_2$-action on the Chow ring. In addition to the conditions that were already mentioned, we now assume that $X$ has a polarization $\theta \colon X\to X^t$, and we work with coefficients in the ring $\Lambda_\theta = \Lambda[\nu(\theta)^{-1}]$, where $\nu(\theta) = \deg(\theta)^{1/2}$. {}From~$\theta$ we construct classes $\ell \in \CH^1_{(0)}(X;\Lambda_\theta)$ and $\lambda \in \CH^{g-1}_{(0)}(X;\Lambda_\theta)$, and we then define endomorphisms $e$, $h$ and~$f$ of~$\CH(X;\Lambda_\theta)$ by
\[
e(x) = \ell\cdot x\, ,\qquad h(x) = (2i-s-g)\cdot x\quad \text{for $x \in \CH^i_{(s)}(X;\Lambda_\theta)$}\, ,\qquad f(x) = \lambda \star x\, .
\]
In Theorem~\ref{thm:sl2} we prove that these endomorphisms define a representation of the Lie algebra~$\fsl_2$ over the ring~$\Lambda_\theta$ on $\CH(X;\Lambda_\theta)$. All subspaces $\CH^\bullet_{(s)}(X;\Lambda_\theta) = \oplus_i\; \CH^i_{(s)}(X;\Lambda_\theta)$ are $\fsl_2$-submodules. Moreover, as representations of~$\fsl_2$, these are all `sums' of copies of the symmetric powers~$\Sym^n(\St)$, for $n=0,\ldots,g$, of the $2$-dimensional standard representation. More precisely, if for $n=0,\ldots,g$ we define
\[
M_n = \bigl\{x \in \oplus_{2i-s=g-n}\; \CH^i_{(s)}(X;\Lambda_\theta) \bigm| f(x) = 0\bigr\}
\]
then we have
\[
\CH(X;\Lambda_\theta) \cong \bigl(\Sym^0(\St) \otimes_{\Lambda_\theta} M_0\bigr) \oplus \bigl(\Sym^1(\St) \otimes_{\Lambda_\theta} M_1\bigr) \oplus \cdots \oplus \bigl(\Sym^g(\St) \otimes_{\Lambda_\theta} M_g\bigr)
\]
as representations of~$\fsl_2$. Note that in general the $\Lambda_\theta$-modules~$M_n$ are neither of finite type nor torsion-free.

\subsection{}
In the final section, we consider an abelian variety~$X$ over an algebraically closed field~$L$. As an application of our results, we show that $\CH^i_{(s)}(X;\Lambda_\theta)$ contains a copy of $X(L) \otimes \Lambda_\theta$ for every $i \in \{1,\ldots,g\}$. In particular, if $p>2g+1$ is a prime number different from~$\mathrm{char}(L)$ with $p\nmid \nu(\theta)$ then $\CH^i_{(s)}(X;\Lambda_\theta)$ contains a subgroup isomorphic to~$(\bbQ_p/\bbZ_p)^{2g}$.

\subsection{}
To conclude this introduction, let us mention that there are other approaches to an integral version of Fourier duality for abelian varieties. For instance, as shown by Polishchuk and one of us in~\cite{DPCRIFT}, the Pontryagin product on~$\CH(X)$ admits divided powers, and the Fourier transform can be defined using only the Pontryagin product. Some very interesting applications of this have been given by Beckmann and de Gaay Fortman in~\cite{BeckdGF}. It could be of interest to see if this perspective can be combined with the methods of the present paper, but we have not yet pursued this.

\subsection{Acknowledgements.} This paper is the outgrowth of a project in the Arizona Winter School 2024 that was led by one of us (B.M.) and in which the other authors participated. We should like to thank the organizers of the AWS, as well as the other lecturers and all participants, for the wonderful and inspiring event. JH, HH,  ML, MM and LM would like to sincerely thank BM for bringing us onto the project at the AWS, as well as for continuing to share his time, energy, and support.

\subsection{Notation and conventions.} If $X$ is a smooth variety over a field and $\Lambda$ is a commutative ring, we define $\CH(X;\Lambda) = \CH(X) \otimes \Lambda$, where $\CH(X)$ is the group of algebraic cycles on~$X$ modulo rational equivalence.

If $X$ is an abelian scheme over some base scheme and $n\in \bbZ$, we denote by~$[n]_X$, or simply~$[n]$, the endomorphism of~$X$ given by multiplication by~$n$.

\section{Review of some results of Pappas}

In this section, we review some results of Pappas~\cite{Pappas} about an integral version of the Grothendieck--Riemann--Roch theorem.

\subsection{}\label{subsec:Tm}
For $m\in \bbN$, define
\[
T_m = \prod_p\; p^{\lfloor \frac{m}{p-1}\rfloor}\; .
\]
By \cite{Pappas}, Lemma~2.1, if $m_1,\ldots,m_r, n_1,\ldots,n_s$ are natural numbers with $m_1 + \cdots + m_r + n_1 + \cdots + n_s \leq m$ then $(m_1+1)! \cdots (m_r+1)! \cdot T_{n_1} \cdots T_{n_s}$ divides~$T_m$.

\begin{lemma}\label{lem:Nh!2}
Let $h$ be a positive integer. Let
\[
N = \begin{cases}
2 & \text{if $h=3$,}\\
h+1 & \text{if $h+1$ is prime,}\\
1 & \text{otherwise.}
\end{cases}
\]
Then $T_h$ divides $N\cdot h!^2$.
\end{lemma}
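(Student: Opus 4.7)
My plan is to reduce the divisibility $T_h \mid N\cdot h!^2$ to an inequality at each prime~$p$ and verify it by analysing the base-$p$ digit expansion of~$h$. By Legendre's formula, $v_p(h!) = (h - s_p(h))/(p-1)$, where $s_p(h)$ denotes the sum of the base-$p$ digits of~$h$, and $v_p(T_h) = \lfloor h/(p-1)\rfloor$. Writing $h = q(p-1) + r$ with $0 \leq r \leq p-2$, the divisibility becomes, prime by prime,
\[
2\, s_p(h) \;\leq\; h + r + (p-1)\, v_p(N). \qquad (\star_p)
\]

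I would treat the primes $p \geq 3$ first, splitting on the size of~$h$. For $h \leq p - 2$ one has $r = h$ and $s_p(h) = h$, so both sides of~$(\star_p)$ equal~$2h$. For $h = p - 1$ one has $s_p(h) = h$ and $r = 0$, so $(\star_p)$ requires $v_p(N) \geq 1$; but this is exactly the range in which $h+1 = p$ is prime, and then $N = h+1$ supplies~$v_p(N) = 1$. For the main range $h \geq p$, I would write $h = ap + b$ with $a = \lfloor h/p\rfloor \geq 1$ and $0 \leq b \leq p-1$, and set $k = \lfloor (a+b)/(p-1)\rfloor$; since $h \equiv a + b \pmod{p-1}$ we have $r = a + b - k(p-1)$. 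A short algebraic manipulation then yields the key identity
\[
h + r - 2\, s_p(h) \;=\; (a-k)(p-1) + 2\bigl(a - s_p(a)\bigr),
\]
both of whose summands are non-negative: $a \geq s_p(a)$ is immediate, and $k \leq a$ follows from the bound $b \leq p - 1 < a(p-2) + p - 1$, valid for every $a \geq 1$ and $p \geq 3$. Thus $(\star_p)$ holds with $v_p(N) = 0$.

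For $p = 2$ the inequality reduces to $2\,s_2(h) \leq h + v_2(N)$. One checks directly that $2\, s_2(h) - h = 1$ for $h \in \{1, 3\}$, $= 0$ for $h = 2$, and for $h \geq 4$ a short induction on the parity of~$h$, using $s_2(m) \leq m - 1$ for $m \geq 2$, gives $2\, s_2(h) \leq h$. The two exceptional values $h = 1, 3$ are precisely the ones matched by the definition of~$N$: for $h = 1$, $h+1 = 2$ is prime and $N = 2$; for $h = 3$ the special clause also yields $N = 2$, and in both cases $v_2(N) = 1$, exactly what $(\star_2)$ demands.

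The main technical step, and the place where one has to be careful, is the identity $h + r - 2\,s_p(h) = (a-k)(p-1) + 2(a - s_p(a))$: the manipulation itself is elementary, but it is essential to pick the decomposition that manifestly exposes the non-negativity of each summand, and to verify~$k \leq a$ uniformly across all relevant $a$ and $p \geq 3$ — in particular at the boundary $a = 1$ with $p = 3$, where the inequality is tight.
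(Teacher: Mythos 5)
Your proof is correct and takes essentially the same route as the paper's: both compare $v_p(T_h)=\lfloor h/(p-1)\rfloor$ with $v_p(N\cdot h!^2)$ prime by prime, isolate the cases $p>h+1$, $p=h+1$ and $p\leq h$, and settle the last case via the base-$p$ digit expansion of~$h$ (the paper by a contradiction argument after normalizing $a_0=p-1$, you by an identity with manifestly non-negative terms). The differences are only in the final combinatorial step, and your treatment of $p=2$ is if anything more explicit than the paper's ``checked by hand.''
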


\begin{proof}
We need to show that $v_p(N\cdot h!^2) \geq v_p(T_h)$ for all primes~$p$. By definition, $v_p(T_m) = \lfloor \frac{m}{p-1}\rfloor$, whereas $v_p(m!) = \sum_{i\geq 1}\; \lfloor \frac{m}{p^i}\rfloor$. If $p> h+1$ then $v_p(T_h) = 0$. If $p = h+1$ is prime then $v_p(N\cdot h!^2) = 1 = v_p(T_h)$. Next suppose $p\leq h$. If $h = a_0 + a_1 p + \cdots a_t p^t$ with $a_0, \ldots, a_t \in \{0,\ldots,p-1\}$ then
\[
v_p(T_h) = \left\lfloor\tfrac{a_0+\cdots + a_t}{p-1}\right\rfloor + \sum_{j=1}^t\; a_j \cdot \tfrac{p^j-1}{p-1}\, ,\quad\text{and}\quad v_p\bigl(h!\bigr) = \sum_{j=1}^t\; a_j \cdot \tfrac{p^j-1}{p-1}\; .
\]
The case $p=2$ is easily checked by hand. If $p>2$ (so that $v_p(N) = 0$) then $v_p(T_h) > v_p(N\cdot h!^2)$ if and only if
\begin{equation}\label{eq:ineq}
\left\lfloor\frac{a_0+\cdots + a_t}{p-1}\right\rfloor > \sum_{j=1}^t\; a_j \cdot \left(\frac{p^j-1}{p-1}\right)\; .
\end{equation}
If this holds, it also holds with $a_0$ replaced by $p-1$; but in that case \eqref{eq:ineq} is equivalent to $a_1+\cdots + a_t \geq \sum_{j=1}^t\; a_j \cdot (p^j-1)$. Because $p>2$, this happens only if $a_1 = \cdots = a_t = 0$, which contradicts the assumption that $p\leq h$.
\end{proof}

\subsection{}\label{subsec:Zpolynoms}
We view the Todd class of a vector bundle as a polynomial in its Chern classes:
\[
\Td = 1 + \tfrac{1}{2} \cdot c_1 + \tfrac{1}{12}\cdot (c_1^2+c_2) + \tfrac{1}{24}\cdot (c_1c_2) + \tfrac{1}{720}\cdot (-c_1^4 + 4c_1^2c_2 + 3c_2^2 +c_1c_3 - c_4) + \cdots \; .
\] 
The term~$\Td_m$ in degree~$m$ is a polynomial in $\bbQ[c_1,c_2,\ldots,c_m]$ whose denominator equals~$T_m$. Define $\fTd_m \in \bbZ[c_1,c_2,\ldots,c_m]$ by
\[
\fTd_m = T_m \cdot \Td_m\, .
\]

Similarly, the Chern character
\[
\ch = r + c_1 + \tfrac{1}{2}\cdot (c_1^2-2c_2) + \tfrac{1}{6}\cdot (c_1^3 -3c_1c_2 + 3c_3) + \cdots
\] 
of a vector bundle is a polynomial with rational coefficients in the rank~$r$ and the Chern classes. The denominator of the degree~$m$ term~$\ch_m$ is~$m!$. Let $r,c_1^\prime, c_2^\prime,\ldots,c_m^\prime$ be a new set of variables and define $\fs_m \in \bbZ[r,c_1^\prime,\ldots,c_m^\prime]$ by
\[
\fs_m(r,c_1^\prime,\ldots,c_m^\prime) = m! \cdot \ch_m(r,c_1^\prime,\ldots,c_m^\prime)\, .
\]

Next, define
\[
\fCT_m = \sum_{j=0}^m\; \frac{T_m}{j! \cdot T_{m-j}} \cdot (\fs_j \cdot \fTd_{m-j}) \, ,
\]
which is a polynomial in $\bbZ[c_1,c_2,\ldots,r,c_1^\prime,c_2^\prime,\ldots]$. (By the remark in~\ref{subsec:Tm}, all $\tfrac{T_m}{j! \cdot T_{m-j}}$ are integers.)

\subsection{}
Let $Y$ be a variety over a field~$L$ with tangent bundle~$\cT_Y$. If $\cF$ is a vector bundle of rank~$r$ on~$Y$, define elements $\fCT_m(Y,\cF)$ and $\fs_m(Y,\cF)$ of~$\CH(Y)$ by
\[
\fCT_m(Y,\cF) = \fCT_m\bigl(c_1(\cT_Y),\ldots,c_m(\cT_Y),r,c_1(\cF),\ldots,c_m(\cF)\bigr)
\]
and
\[
\fs_m(Y,\cF) = \fs_m\bigl(r,c_1(\cF),\ldots,c_m(\cF)\bigr)\, .
\]
(In particular, if $\cF$ is a line bundle then $\fs_m(Y,\cF) = c_1(\cF)^m$.) These elements only depend on the class of~$\cF$ in the Grothendieck group~$K_0(Y)$. If $Y/L$ is smooth, $K_0(Y)$ is isomorphic to the Grothendieck group of coherent $\cO_Y$-modules; therefore, the definitions of $\fCT_m(Y,\cF)$ and $\fs_m(Y,\cF)$ make sense for coherent sheaves~$\cF$. In this situation, we simply write~$K(Y)$ for either Grothendieck group, and we write $[\cF]$ for the class of~$\cF$. 

If $f \colon X\to S$ is a projective morphism of smooth varieties over~$L$ and $\cF$ is a coherent $\cO_X$-module, we define $f_*[\cF] = \sum_{i\geq 0}\; (-1)^i [R^if_*\cF]$. Further, we define $\fCT_m(X/S,\cF)$ by evaluating $\fCT_m$ at $c_i = c_i\bigl([\cT_X]-[f^*\cT_S]\bigr)$ and $c_j^\prime = c_j(\cF)$. If $f$ is smooth, $[\cT_X]-[f^*\cT_S] = [\cT_{X/S}]$.

\subsection{}
Let $Q(x) = \frac{x}{1-e^{-x}}$, thought of as a formal power series in~$x$, and let~$e_i$ be the $i$th elementary symmetric function. By definition, if $\alpha_1,\ldots,\alpha_r$ are the Chern roots of a vector bundle~$E$ then $c_i(E) = e_i(\alpha_1,\ldots,\alpha_r)$ and $\Td(E) = \prod_{i=1}^r\; Q(\alpha_i)$.

As explained in~\cite{Pappas}, Section~4.c, if $E$ is a vector bundle of rank~$r$ on a variety~$Y$ over~$L$ then
\[
\fTd^\inv_n(Y,E) = (n+r)! \cdot \Bigl\{\prod_{i=1}^r\; Q(\alpha_i)^{-1} \Bigr\}_n
\]
(where $\{~\}_n$ means the degree~$n$ part of $\prod\, Q(\alpha_i)^{-1}$ as a power series in the~$\alpha_i$) is a symmetric homogeneous polynomial in the~$\alpha_i$ with integral coefficients, and therefore defines an integral homogeneous polynomial expression in the Chern classes~$c_j(E)$. The `$\inv$' is for inverse; see~\ref{subsec:ids}\ref{TdTdinv}.

\subsection{Some identities.}\label{subsec:ids} We list some identities that we need later. To simplify expressions, we omit~$Y$ from the notation. Note that all coefficients that occur in these relations are integers.
\begin{enumerate}
\item\label{TdH} If $0 \tto E_1 \tto \cH \tto E_2 \tto 0$ is a short exact sequence,
\[
\fTd_i(\cH) = \sum_{j+k=i}\; \frac{T_i}{T_j\cdot T_k}\cdot \fTd_j(E_1) \cdot \fTd_k(E_2)\, .
\]

\item\label{TdETdEdual} For $E$ a vector bundle with dual bundle~$E^\vee$, we have
\[
\fTd_i(E) = \sum_{j+k=i}\; \frac{T_i}{T_j\cdot k!}\cdot \fTd_j(E^\vee) \cdot \fs_k\bigl(\det(E)\bigr)\, .
\]

\item\label{TdTdinv} If $E$ is a vector bundle of rank~$r$ and $m \in \bbN$ then
\[
\sum_{i=0}^m\; \frac{T_{r+m}}{T_i\cdot (r+m-i)!}\cdot \fTd_i(E) \cdot \fTd^\inv_{m-i}(E) = \begin{cases} T_{r+m} & \text{for $m=0$,} \\ 0 & \text{for $m\geq 1$.} \end{cases}
\]

\item\label{binom} If $\cL$ is a line bundle and $\delta = c_1(\cL)$ then we have the (obvious) identity
\[
\sum_{i=0}^m\; \binom{m}{i} \cdot \delta^i \cdot (-\delta)^{m-i} = \begin{cases} 1 & \text{for $m=0$,} \\ 0 & \text{for $m\geq 1$.} \end{cases}
\]
\end{enumerate}

\begin{theorem}[Pappas \cite{Pappas}, Corollary~2.3 and Proposition~4.4.]\label{thm:Pappas}
Let $X$ and~$S$ be smooth and quasi-projective varieties over a field~$L$ of characteristic~$0$.
\begin{enumerate}
\item\label{Pappas1} Let $f \colon X \to S$ be a smooth projective morphism of relative dimension~$g$. Let $\cF$ be a coherent $\cO_X$-module. Then in $\CH^n(S)$ we have the identity
\[
f_*\bigl(\fCT_{g+n}(X/S,\cF)\bigr) = \frac{T_{g+n}}{n!} \cdot \fs_n\bigl(S,f_*[\cF]\bigr)\, .
\]

\item\label{Pappas2} Let $\epsilon \colon Z\hookrightarrow X$ be a closed immersion of codimension~$g$ with normal sheaf~$N$, and let $\cF$ be a coherent $\cO_Z$-module. Then for $n\in \bbN$ with $n\geq g$ we have
\[
\fs_n\bigl(X,\epsilon_*[\cF]\bigr) =  \sum_{a=0}^{n-g}\; \binom{n}{a}\cdot \epsilon_*\bigl(\fs_a(Z,\cF) \cdot \fTd^\inv_{n-g-a}(Z,N) \bigr)\, ,
\]
and $\fs_n(X,\epsilon_*[\cF]) = 0$ if $n<g$. 
\end{enumerate}
\end{theorem}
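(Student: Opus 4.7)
My plan is to reduce both parts to the classical Grothendieck--Riemann--Roch theorem over~$\bbQ$ while carefully tracking denominators, and then to promote the rational identities to integral ones via a splitting-principle argument. Unwinding the definitions in~\ref{subsec:Zpolynoms} gives
\[
\fCT_{g+n}(X/S,\cF) = T_{g+n}\cdot \bigl[\ch(\cF)\cdot \Td(\cT_{X/S})\bigr]_{g+n}, \qquad \fs_n(S,f_*[\cF]) = n!\cdot \ch_n(f_*[\cF]),
\]
so the classical identity $f_*\bigl([\ch(\cF)\Td(\cT_{X/S})]_{g+n}\bigr) = \ch_n(f_*[\cF])$ in $\CH^n(S;\bbQ)$, multiplied by~$T_{g+n}$, gives part~\ref{Pappas1} modulo torsion; the coefficient $T_{g+n}/n!$ on the right is integral by the divisibility remark in~\ref{subsec:Tm}. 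Similarly, part~\ref{Pappas2} becomes the classical closed-immersion GRR formula $\ch(\epsilon_*\cF) = \epsilon_*\bigl(\ch(\cF)\cdot \Td(N)^{-1}\bigr)$ with denominators cleared using the definition of $\fTd^\inv$ and the identity $\epsilon^*\Td(\cT_X) = \Td(\cT_Z)\cdot \Td(N)$; the vanishing for $n<g$ is automatic since $\epsilon_*$ raises degree by~$g$.

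Proving these identities integrally, rather than only after tensoring with~$\bbQ$, is the main obstacle, since $\CH(S)$ in general has torsion. I would handle part~\ref{Pappas2} first via the Koszul resolution of $\epsilon_*\cO_Z$ by exterior powers of the conormal bundle~$N^\vee$, combined with the projection formula for a general $\cF$ on~$Z$, to express both sides of the claimed identity as universal polynomials in the Chern classes of $N$ and of a virtual bundle representing~$\cF$. One then verifies this polynomial identity universally by pulling back to an iterated flag bundle over~$Z$ that splits both $N$ and~$\cF$: the Chow ring of such a bundle is a polynomial ring over~$\CH(Z)$ in the Chern roots, so a polynomial identity in those roots cannot be obstructed by torsion in~$\CH(Z)$, and the rational equality already established forces the integral one. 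The identities collected in~\ref{subsec:ids} are precisely what is needed to manipulate the integrally cleared Todd and Chern-character contributions after splitting.

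For part~\ref{Pappas1}, I would factor a smooth projective $f\colon X\to S$ as a closed immersion $X\hookrightarrow \bbP(\cE)$ followed by the smooth projection $\bbP(\cE)\to S$, using a relatively very ample line bundle on~$X/S$. The closed-immersion step reduces to part~\ref{Pappas2}, while for the projective bundle step $\CH(\bbP(\cE))$ is a free $\CH(S)$-module generated by powers of $c_1(\cO(1))$, so pushforwards of monomials can be computed explicitly and the identity reduces to a direct calculation combining the relations in~\ref{subsec:ids} with Lemma~\ref{lem:Nh!2} and the basic divisibility statement of~\ref{subsec:Tm}. The main difficulty throughout is the bookkeeping needed to confirm that the specific integer coefficients on the two sides match exactly; the numerical lemmas of~\ref{subsec:Tm} and~\ref{lem:Nh!2} are designed precisely to make this possible.
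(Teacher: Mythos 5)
First, note that the paper does not prove Theorem~\ref{thm:Pappas} at all: it is quoted from Pappas \cite{Pappas} (Corollary~2.3 and Proposition~4.4), so there is no internal argument to compare yours against; the relevant comparison is with Pappas's own proof. Your sketch reproduces the broad architecture of that proof (factor $f$ through a projective bundle, treat closed immersions via a Riemann--Roch-without-denominators computation, reduce to universal polynomial identities in Chern roots), but the crucial step --- passing from the known rational identity to the asserted integral one --- is not justified by what you write. Pulling back to an iterated flag bundle does not remove torsion: the Chow ring of such a bundle is a \emph{free module} over $\CH(Z)$, so it contains exactly the same torsion, and two classes that agree after tensoring with~$\bbQ$ there need not agree integrally. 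The splitting principle only helps once both sides of the identity have been rewritten as the \emph{same} operation ($\epsilon_*$ or $f_*$) applied to polynomial expressions in the Chern roots with \emph{integer} coefficients, at which point the comparison takes place in a torsion-free universal polynomial ring. Getting to that point is the real content: one must collapse products of pushforwards using the self-intersection formula $\epsilon^*\epsilon_*(\alpha)=c_g(N)\cdot\alpha$, and control the resulting integrality of coefficients with the divisibility statement of~\ref{subsec:Tm} and the identities of~\ref{subsec:ids}. None of this appears in your outline beyond the assertion that it can be done.

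Second, and symptomatically, your argument never uses the hypothesis $\mathrm{char}(L)=0$. If the theorem followed from rational GRR plus the splitting principle, it would hold in every characteristic, and the lifting conditions \ref{subsec:charL}\ref{charL=0}--\ref{basechange} that the present paper imposes on its abelian schemes (precisely because Pappas's results are only available in characteristic zero) would be unnecessary. In \cite{Pappas} the characteristic-zero assumption is genuinely used, via resolution of singularities, to reduce the case of an arbitrary coherent sheaf to sheaves supported on smooth centres; a correct proof must invoke it somewhere. As it stands, your proposal is a plausible roadmap, but it has a gap exactly at the point where the integral, as opposed to rational, statement is established.
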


\section{Application to abelian schemes}

We now apply the results of Pappas to abelian schemes. Proposition~\ref{prop:keyformula} is the main preparation for the results in the next sections. Throughout, $S$ denotes a connected quasi-projective variety which is smooth over a field~$L$. Let $d = \dim(S)$.

\subsection{}\label{subsec:setup}
Let $\pi\colon X \to S$ be an abelian scheme of relative dimension~$g>0$ with zero section $e\colon S\to X$. Let
\[
E_{X/S} = e^*\Omega^1_{X/S}\, ,
\]
which is a rank~$g$ vector bundle on~$S$, called the Hodge bundle of~$X$. Note that $E_{X/S} \cong \pi_*\Omega^1_{X/S}$. We have $\cT_{X/S} \cong \pi^*E_{X/S}^\vee$, so for $\cL$ a line bundle on~$X$ this gives
\begin{equation}\label{eq:fCTnX/S}
\fCT_n(X/S,\cL) = \sum_{i+j=n}\; \frac{T_n}{i^!\cdot T_j}\cdot c_1(\cL)^i \cdot \pi^*\fTd_j(S,E_{X/S}^\vee)\, .
\end{equation}

Let $\pi^t \colon X^t \to S$ be the dual abelian scheme. Its Hodge bundle~$E_{X^t/S}$ satisfies the relation $\det(E_{X/S}) \cong \det(E_{X^t/S})$; see \cite{GW}, Corollary~27.230. Let $\cP$ be the Poincar\'e line bundle on $X \times_S X^t$, and write $\wp = c_1(\cP) \in \CH^1(X\times_S X^t)$. For the pushforward of~$\cP$ under the first projection $\pr_1 \colon X\times_S X^t \to X$ we have (ibid., Theorem~27.243),
\begin{equation}\label{eq:Ripr1*P}
R^i\pr_{1,*}(\cP) = \begin{cases} 
0 & \text{for $i\neq g$}\\ 
e_*\bigl(\det(E_{X/S})^{-1}\bigr) & \text{for $i=g$.}
\end{cases}
\end{equation}

The de Rham bundle of $X$ over~$S$ is defined as
\[
\cH^1_\dR(X/S) = R^1\pi_*(\Omega^\bullet_{X/S})\, ,
\]
which is a vector bundle of rank~$2g$ over~$S$. By the degeneration of the Hodge--de Rham spectral sequence, it sits in a short exact sequence
\begin{equation}\label{eq:sesdR}
0 \tto E_{X/S} \tto \cH^1_\dR(X/S) \tto E_{X^t/S}^\vee \tto 0\, .
\end{equation}

\subsection{}\label{subsec:charL}
We now assume that we are in one of the following situations:
\begin{enumerate}[label=\textup{(\Alph*)}]
\item\label{charL=0} $\mathrm{char}(L) = 0$,

\item\label{lift} $\mathrm{char}(L) = p > 0$ and $X/S$ can be lifted to characteristic~$0$, by which we mean that there exists a discrete valuation ring~$R$ of mixed characteristic $(0,p)$ with residue field~$L$, a smooth $R$-scheme~$\cS$ and an abelian scheme $\cX\to \cS$, whose special fibre is (isomorphic to) the abelian scheme $X\to S$,

\item\label{basechange} $X\to S$ can be obtained by base-change from a case where \ref{lift} holds.
\end{enumerate}
In particular, this includes the case where $X$ is the universal abelian scheme over the moduli scheme $S = \cA_{g,\delta,n}$ of $g$-dimensional abelian varieties with a polarization of degree~$\delta^2$ and a level~$n$ structure, over $L=\bbQ$ or $L = \bbF_p$ with $p\nmid n\delta$. By using~\ref{basechange}, it follows that we are in the above situation as soon as~$X$ admits a polarization~$\theta$ and a level~$n$ structure with $n\geq 3$ and $\mathrm{char}(L) \nmid n\cdot \deg(\theta)$. At the other extreme, if $S = \Spec(L)$ is a point, then by the results of Norman~\cite{Norman}, either~\ref{charL=0} or~\ref{lift} is satisfied with the possible exception of the case when $\mathrm{char}(L) = 2$ and $X$ admits no polarization of degree prime to~$2$. Even in this last case, the results of Norman and Oort in~\cite{NormanOort} guarantee that $X$ can be lifted to characteristic~$0$ after replacing~$L$ by a finite extension, although allowing a field extension goes somewhat against the grain of studying integral Chow groups, as for a field extension $L \subset L^\prime$ the map $\CH(X) \to \CH(X_{L^\prime})$ is in general neither injective nor surjective. 
\medskip

The following result is the key ingredient for our results on Fourier duality in the next section. To simplify notation, we write $\fTd_j(\cH)$ instead of $\fTd_j(S,\cH)$, and likewise for the Hodge bundles~$E_{X/S}$ and~$E_{X^t/S}$.

\begin{proposition}\label{prop:keyformula}
Let notation be as in~\ref{subsec:setup}, and assume that one of the conditions in~\ref{subsec:charL} is satisfied. Let $\cH = \cH^1_\dR(X/S)$. For $m \in \bbN$ we have
\begin{equation}\label{eq:keyform}
\sum_{i+j=m}\; \frac{T_{g+m}}{(g+i)!\cdot T_j} \cdot \pr_{1,*}(\wp^{g+i}) \cdot \pi^*\fTd_j(\cH) = 
\begin{cases} 0 & \text{if $m\neq g$,} \\ (-1)^g \cdot T_{2g} \cdot \bigl[e(S)\bigr] & \text{if $m=g$,} \end{cases}
\end{equation}
in $\CH(X)$.
\end{proposition}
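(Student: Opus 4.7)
The plan is to apply Pappas's integral Grothendieck--Riemann--Roch (Theorem~\ref{thm:Pappas}\ref{Pappas1}) to the smooth projective morphism $\pr_1 \colon X \times_S X^t \to X$ with $\cF = \cP$, and then rearrange both sides using the four structural identities of~\ref{subsec:ids} together with Theorem~\ref{thm:Pappas}\ref{Pappas2} applied to $e \colon S \hookrightarrow X$. The characteristic-$0$ hypothesis of Pappas's theorem will be handled first: in cases~\ref{lift} and~\ref{basechange} of~\ref{subsec:charL}, all the classes appearing in~\eqref{eq:keyform} are compatible with specialization to the special fibre (respectively with flat pullback), so the identity on a characteristic~$0$ lift $\cX/\cS$ restricts to the desired identity on~$X/S$. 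Hence I may assume $\mathrm{char}(L) = 0$.

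Because $\pr_1$ is base-changed from~$\pi^t$, its relative tangent bundle is $\pr_1^*\pi^* E_{X^t/S}^\vee$, and the analogue of~\eqref{eq:fCTnX/S} together with the vanishing $\pr_{1,*}(\wp^a) = 0$ for $a<g$ rewrites Pappas's identity as
\[
\sum_{i+k=n}\; \frac{T_{g+n}}{(g+i)!\cdot T_k}\cdot \pr_{1,*}(\wp^{g+i})\cdot \pi^*\fTd_k(E_{X^t/S}^\vee) \;=\; \frac{T_{g+n}}{n!}\cdot \fs_n\bigl(X,\pr_{1,*}[\cP]\bigr).
\]
By~\eqref{eq:Ripr1*P}, $\pr_{1,*}[\cP] = (-1)^g\, e_*[\det(E_{X/S})^{-1}]$ in $K(X)$, and Theorem~\ref{thm:Pappas}\ref{Pappas2} applied to~$e$ (with normal bundle $E_{X/S}^\vee$) expresses $\fs_n(X,\pr_{1,*}[\cP])$ explicitly as an $e_*$-pushforward involving powers of $\delta = c_1(\det E_{X/S})$ and classes $\fTd^\inv_{n-g-a}(E_{X/S}^\vee)$.

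To derive~\eqref{eq:keyform}, I will multiply the displayed identity by $\pi^*\fTd_{j_1}(E_{X/S})/T_{j_1}$ and sum over $(j_1,n)$ with $j_1+n=m$. On the left, the identity~\eqref{TdH} applied to the Hodge filtration~\eqref{eq:sesdR} recombines $\fTd_{j_1}(E_{X/S}) \cdot \fTd_k(E_{X^t/S}^\vee)$ into $\fTd_{j_1+k}(\cH)$, reproducing exactly the left-hand side of~\eqref{eq:keyform}. On the right, the projection formula (using $e^*\pi^* = \id$) pulls $\pi^*\fTd_{j_1}(E_{X/S})$ inside~$e_*$, and then~\eqref{TdETdEdual} rewrites $\fTd_{j_1}(E_{X/S})$ in terms of $\fTd(E_{X/S}^\vee)$ and powers of~$\delta$. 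The resulting products $\delta^k (-\delta)^a$ are killed by~\eqref{binom} except in the case $k=a=0$, after which identity~\eqref{TdTdinv} applied to $E_{X/S}^\vee$ of rank~$g$ collapses the remaining sum to~$0$ for $m>g$ and to~$1$ for $m=g$; the case $m<g$ vanishes already because $\fs_{m-j_1}(X,\pr_{1,*}[\cP])=0$ when $m-j_1<g$. Collecting the three cases, and taking into account the overall factor $(-1)^g T_{g+m}$, yields exactly~\eqref{eq:keyform}.

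The main obstacle is purely combinatorial: one must identify the correct weighted summation so that the four identities \eqref{TdH}, \eqref{TdETdEdual}, \eqref{binom} and~\eqref{TdTdinv} interlock. The weight $1/T_{j_1}$ is forced by~\eqref{TdH}, and once this choice is made the subsequent cancellations occur in a predetermined order; the sign $(-1)^g$ and the prefactor $T_{2g}$ then emerge automatically. No ingredient beyond Pappas's theorem and the universal identities of~\ref{subsec:ids} is required.
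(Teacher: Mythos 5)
Your proposal is correct and follows essentially the same route as the paper's proof: reduction to characteristic~$0$ by specialization, Pappas's Theorem~\ref{thm:Pappas}\ref{Pappas1} applied to $\pr_1\colon X\times_S X^t\to X$ combined with \eqref{eq:Ripr1*P} and Theorem~\ref{thm:Pappas}\ref{Pappas2} for the zero section, and then the interlocking identities \ref{subsec:ids}\ref{TdH}, \ref{TdETdEdual}, \ref{binom} and \ref{TdTdinv}. The only difference is presentational (you build up the left-hand side from Pappas's identity by summing against $\pi^*\fTd_{j_1}(E_{X/S})/T_{j_1}$, whereas the paper expands the left-hand side first), and the cancellations occur in the same order either way.
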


\begin{proof}
It suffices to prove this in case $\mathrm{char}(L) = 0$; in case~\ref{subsec:charL}\ref{lift}, the result then follows by specialization, and then it is clear that the identity also holds in case~\ref{basechange}.

Let $\delta = c_1\bigl(\det(E_{X/S})\bigr) \in \CH^1(S)$. By \ref{subsec:ids}\ref{TdH} and \ref{subsec:ids}\ref{TdETdEdual}, we can expand the LHS of~\eqref{eq:keyform} as
\begin{equation}\label{eq:expanded}
\sum_{i+j+k+\ell=m}\; \frac{T_{g+m}}{(g+i)! \cdot T_j \cdot T_k\cdot \ell!} \cdot \pr_{1,*}(\wp^{g+i}) \cdot \pi^*\Bigl(\fTd_j(E^\vee_{X^t/S}) \cdot \fTd_k(E^\vee_{X/S}) \cdot \delta^\ell \Bigr)\, .
\end{equation}

We apply Theorem~\ref{thm:Pappas}\ref{Pappas1} to $X\times_S X^t$, viewed as an abelian scheme over~$X$ via~$\pr_1$. Note that $\pr_{1,*}(\wp^q) = 0$ if $q<g$, for dimension reasons. Using~\eqref{eq:fCTnX/S}, we find
\[
\sum_{i+j=n}\; \frac{T_{g+n}}{(g+i)^!\cdot T_j}\cdot \pr_{1,*}(\wp^{g+i}) \cdot \pi^*\fTd_j(E_{X^t/S}^\vee) = (-1)^g \cdot \frac{T_{g+n}}{n!}\cdot \fs_n\bigl(X,e_*[\det(E_{X/S})^{-1}]\bigr)\, .
\]
Next we apply Theorem~\ref{thm:Pappas}\ref{Pappas2} to $e\colon S\to X$, which has normal bundle~$E^\vee_{X/S}$. For $n\geq g$ this gives
\begin{align*}
\fs_n\bigl(X,e_*[\det(E_{X/S})^{-1}]\bigr) &= \sum_{a=0}^{n-g}\; \binom{n}{a} \cdot e_*\bigl((-\delta)^a \cdot \fTd^\inv_{n-g-a}(E^\vee_{X/S})\bigr) \\
&= \sum_{a=0}^{n-g}\; \binom{n}{a} \cdot \pi^*(-\delta)^a \cdot \pi^*\bigl(\fTd^\inv_{n-g-a}(E^\vee_{X/S})\bigr) \cdot \bigl[e(S)\bigr]\, ,
\end{align*}
and $\fs_n\bigl(X,e_*[\det(E_{X/S})^{-1}]\bigr) = 0$ if $n<g$. Together, this gives for $n\geq g$ the identity
\begin{multline}\label{i+j=n}
\sum_{i+j=n}\; \frac{T_{g+n}}{(g+i)^!\cdot T_j}\cdot \pr_{1,*}(\wp^{g+i}) \cdot \pi^*\fTd_j(E_{X^t/S}^\vee) =\\
(-1)^g \cdot \frac{T_{g+n}}{n!}\cdot \sum_{a=0}^{n-g}\; \binom{n}{a} \cdot \pi^*(-\delta)^a \cdot \pi^*\bigl(\fTd^\inv_{n-g-a}(E^\vee_{X/S})\bigr) \cdot \bigl[e(S)\bigr]\, .\qquad
\end{multline}
Further, the LHS is zero if $n< g$. Using this in~\eqref{eq:expanded}, we already see that the LHS of~\eqref{eq:keyform} is zero if~$m<g$.

Next assume $m\geq g$, and write $m = g+\mu$. In~\eqref{eq:expanded}, we collect all terms for which $i+j$ is constant, say $i+j = n$. As have just seen, the total contribution is zero if $n<g$. Writing $n = g+\nu$ if $n \geq g$, we can rewrite~\eqref{eq:expanded} as
\[
(-1)^g \cdot \sum_{k+\ell+\nu = \mu}\; \sum_{a=0}^\nu \; \frac{T_{2g+\mu}}{T_k\cdot \ell!\cdot a!\cdot (g+\nu-a)!} \cdot \pi^*\Bigl((-\delta)^a \cdot \fTd^\inv_{\nu-a}(E^\vee_{X/S}) \cdot \fTd_k(E^\vee_{X/S}) \cdot \delta^\ell\Bigr) \cdot \bigl[e(S)\bigr]\, .
\]
Setting $c = \nu-a$, we can rewrite this as
\[
(-1)^g \cdot \sum_{k+\ell+a+c= \mu}\; \frac{T_{2g+\mu}}{T_k\cdot \ell!\cdot a!\cdot (g+c)!} \cdot \pi^*\Bigl((-\delta)^a \cdot \fTd^\inv_c(E^\vee_{X/S}) \cdot \fTd_k(E^\vee_{X/S}) \cdot \delta^\ell\Bigr) \cdot \bigl[e(S)\bigr]\, ,
\]
which by \ref{subsec:ids}\ref{binom} simplifies to
\[
(-1)^g \cdot \sum_{k+c= \mu}\; \frac{T_{2g+\mu}}{T_k\cdot (g+c)!} \cdot \pi^*\Bigl(\fTd^\inv_c(E^\vee_{X/S}) \cdot \fTd_k(E^\vee_{X/S})\Bigr) \cdot \bigl[e(S)\bigr]\, .
\]
By \ref{subsec:ids}\ref{TdTdinv} this equals $(-1)^g \cdot T_{2g} \cdot \bigl[e(S)\bigr]$ if $\mu =0$ and equals~$0$ if $\mu>0$.
\end{proof}

\section{Integral aspects of Fourier duality}

\subsection{}\label{subsec:CH(X)}
Throughout this section, $L$ is a field and $S$ is a connected quasi-projective $L$-scheme which is smooth over~$L$. Let $X$, $Y$ and~$Z$ be smooth proper $S$-schemes of finite type. 

The intersection product makes $\CH^*(X)$ a commutative graded $\CH(S)$-algebra, where the structural homomorphism is~$\pi^*$ and the identity element is the class~$[X]$. 

A class $\gamma \in \CH(X\times_S Y)$, usually referred to as a correspondence from~$X$ to~$Y$, defines a homomorphism $\gamma_* \colon \CH(X) \to \CH(Y)$ by the rule $\gamma_*(x) = \pr_{Y,*}\bigl(\gamma \cdot \pr_X^*(x)\bigr)$; here $\pr_X$ and~$\pr_Y$ are the projection maps from $X\times_S Y$ to~$X$ and~$Y$, respectively.

If $\gamma$ is a correspondence from $X$ to~$Y$ and $\delta$ is a correspondence from $Y$ to~$Z$, we define
\[
\delta \circ \gamma = \pr_{XZ,*}\bigl(\pr_{XY}^*(\gamma) \cdot \pr_{YZ}^*(\delta)\bigr)\, ,
\]
where $\pr_{XY}$ is the projection map $X\times_S Y \times_S Z \to X\times_S Y$, and similarly for the other projections. With this notation, we have the rule $(\delta\circ \gamma)_* = \delta_* \circ \gamma_*$.

\subsection{}
Let $\pi \colon X\to S$ be an abelian scheme of relative dimension $g>0$ with zero section $e \colon S\to X$. The Pontryagin product~$\star$ on~$\CH(X)$ is defined by 
\[
x\star y = m_*(x\times y)\, ,
\]
where  $m\colon X\times_S X \to X$ is the group law and $x\times y = \pr_1^*(x) \cdot \pr_2^*(y) \in \CH(X\times_S X)$ is the exterior product. 

Define
\[
\CH_i(X/S) = \CH_{d+i}(X)\, ,
\]
and let $\CH_*(X/S) = \oplus_i\; \CH_i(X/S)$. Pontryagin product makes $\CH_*(X/S)$ a commutative graded $\CH(S)$-algebra, where the ring structure on~$\CH(S)$ is the intersection product and the structural homomorphism $\CH(S) \to \CH(X/S)$ is~$e_*$. The identity element is the class $[e(S)] \in \CH_0(X/S)$ of the zero section.

\subsection{}\label{subsec:Fouriersetup}
In the rest of this section, we assume that one of the conditions in~\ref{subsec:charL} is satisfied. Let $\pi^t \colon X^t \to S$ be the dual abelian scheme, let $\cP = \cP_X$ be the Poincar\'e bundle on $X \times_S X^t$, and write $\wp = c_1(\cP) \in \CH^1(X\times_S X^t)$. Let $d = \dim(S)$.

Define $\sF_X \colon \CH(X) \to \CH(X^t)$ by $\sF_X = \gamma_{X,*}$, with 
\[
\gamma_X = \sum_{i=0}^{2g+d}\; \frac{(2g+d)!}{i!}\cdot \wp^i = (2g+d)! \cdot \ch(\cP_X)\, .
\]
Concretely, this means that for $\alpha \in \CH(X)$ we have 
\[
\sF_X(\alpha) = \pr_{X^t,*}\bigl(\gamma_X \cdot \pr_X^*(\alpha)\bigr)\; ,
\]
whose image in~$\CH(X^t;\bbQ)$ is $(2g+d)!$ times the usual Fourier transform of~$\alpha$. If the context is clear, we drop the subscript~$X$. Further, we let $\gamma^t = \gamma_{X^t}$, and we define $\sF^t = \gamma^t_* \colon \CH(X^t) \to \CH(X)$.

\subsection{}\label{subsec:N}
Define $N$ as in Lemma~\ref{lem:Nh!2} with $h = 2g+d$, so
\[
N = \begin{cases} 2 & \text{if $2g+d=3$,} \\ 2g+d+1 & \text{if $2g+d+1$ is prime}\\ 1 & \text{otherwise.} \end{cases}
\]
By that lemma and the remark in~\ref{subsec:Tm}, $T_{g+k}$ divides $N\cdot (2g+d)!^2$ for all $k\leq g+d$.

\begin{theorem}\label{thm:FtF}
Let notation and assumptions be as in~\ref{subsec:Fouriersetup}, and recall that $d = \dim(S)$ and $g = \dim(X/S)$. Let $\cH = \cH^1_\dR(X/S)$, define $\sF$ and~$\sF^t$ as above, and let $N$ be as in~\ref{subsec:N}. Then for $\alpha \in \CH(X)$ we have
\begin{equation}\label{eq:FtFformula}
N \cdot (\sF^t \circ \sF)\bigl(\alpha\bigr) \cdot \pi^*\left(\sum_{j=0}^d\; \frac{T_d}{T_j} \cdot \fTd_j(\cH) \right) = 
(-1)^g \cdot N\cdot (2g+d)!^2 \cdot T_d \cdot [-1]_X^*(\alpha)\, .
\end{equation}
\end{theorem}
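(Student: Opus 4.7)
The plan is to adapt Beauville's proof that $\cF^t\circ \cF = (-1)^g [-1]_X^*$ (with rational coefficients) to the integral setting, substituting Proposition~\ref{prop:keyformula} for the classical Grothendieck--Riemann--Roch identity. Work on the triple fibre product $X_1 \times_S X^t \times_S X_2$ (with $X_1 = X_2 = X$), with projections $p_1,p_2,p_3$ onto the factors and $p_{ij}$ onto pairs; write $\wp_{12}, \wp_{23}$ for the pullbacks of the Poincar\'e class. By the composition of correspondences,
\[
(\sF^t\circ \sF)(\alpha) = p_{3,*}\bigl(p_{12}^*\gamma\cdot p_{23}^*\gamma^t\cdot p_1^*\alpha\bigr) = (2g+d)!^2\cdot p_{3,*}\bigl(\exp(\wp_{12}+\wp_{23})\cdot p_1^*\alpha\bigr),
\]
where the exponential is a finite sum by nilpotency.

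The first step is to simplify via the $S$-automorphism $\sigma(x_1,y,x_2) = (x_1-x_2,y,x_2)$. Using the bi-extension property of~$\cP$ one finds $\sigma^*\wp_{12} = \wp_{12}-\wp_{23}$ and $\sigma^*\wp_{23} = \wp_{23}$, hence $\sigma^*\exp(\wp_{12}+\wp_{23}) = \exp(\wp_{12})$, while $\sigma^*p_1^*\alpha = d'^*\alpha$ with $d'(x_1,y,x_2) = x_1-x_2$. Since $p_3\circ\sigma = p_3$ one gets $(\sF^t\circ\sF)(\alpha) = (2g+d)!^2\cdot p_{3,*}(\exp(\wp_{12})\cdot d'^*\alpha)$. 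Factoring $p_3 = q_2\circ p_{13}$ (where $q_i$ are the projections $X_1\times_S X_2 \to X_i$), flat base-change for the Cartesian square with vertical arrows $p_{13}$ and $\pr_1$, combined with the projection formula (writing $d' = \tilde d\circ p_{13}$ for $\tilde d(x_1,x_2) = x_1-x_2$), reduces the expression to
\[
(\sF^t\circ\sF)(\alpha) = q_{2,*}\bigl(\tilde d^*\alpha\cdot q_1^*\tilde F_X\bigr),\qquad \tilde F_X := (2g+d)!^2\cdot \pr_{1,*}\bigl(\exp(\wp)\bigr)\in \CH(X).
\]

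The main step is the integral identity
\[
N\cdot \tilde F_X\cdot \pi^*\Bigl(\sum_{j=0}^d\tfrac{T_d}{T_j}\fTd_j(\cH)\Bigr) = (-1)^g\cdot N\cdot (2g+d)!^2\cdot T_d\cdot [e(S)]\qquad\text{in }\CH(X).
\]
Expanding the left-hand side and regrouping by codegree $m = i+j$ gives a sum $\sum_m L_m$ with $L_m \in \CH^m(X)$. Terms with $m>g+d=\dim(X)$ vanish by codimension. For $m \leq g+d$, the remark in~\ref{subsec:Tm} shows $T_{g+m}\mid T_{2g+d}$, and Lemma~\ref{lem:Nh!2} shows $T_{2g+d}\mid N(2g+d)!^2$; hence the rescaling factor $N(2g+d)!^2 T_d/T_{g+m}$ is an integer, and $N L_m$ equals this integer multiple of the left-hand side $K_m$ of Proposition~\ref{prop:keyformula} at level~$m$. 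By that proposition $K_m = 0$ for $m\neq g$ and $K_g = (-1)^g T_{2g}[e(S)]$, which gives the displayed identity. This is the main technical obstacle: the factor~$N$ is introduced precisely so that these rescaling factors are integers, allowing the integral identity of Proposition~\ref{prop:keyformula} to be invoked term-by-term rather than only rationally.

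Combining the two steps via the projection formula,
\[
N(\sF^t\circ\sF)(\alpha)\cdot\pi^*\Bigl(\sum_{j=0}^d\tfrac{T_d}{T_j}\fTd_j(\cH)\Bigr) = (-1)^g N(2g+d)!^2 T_d\cdot q_{2,*}\bigl(\tilde d^*\alpha\cdot q_1^*[e(S)]\bigr).
\]
A direct computation gives $q_1^*[e(S)] = [\{0\}\times_S X_2]$; the restriction of~$\tilde d$ to this subscheme is the automorphism $[-1]$ of $X_2$, so $q_{2,*}(\tilde d^*\alpha\cdot q_1^*[e(S)]) = [-1]^*\alpha$, proving the theorem.
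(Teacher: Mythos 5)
Your proof is correct and follows essentially the same route as the paper's: both reduce $\sF^t\circ\sF$ to the correspondence built from $\pr_{1,*}(\wp^{g+i})$ via the theorem of the cube (biextension property) and a base-change argument, then invoke Proposition~\ref{prop:keyformula} term by term, with the choice of~$N$ (via Lemma~\ref{lem:Nh!2} and the divisibility remark in~\ref{subsec:Tm}) guaranteeing that the rescaling factors $N(2g+d)!^2T_d/T_{g+m}$ are integers. The only difference is bookkeeping: you untwist $\exp(\wp_{12}+\wp_{23})$ by the automorphism $\sigma(x_1,y,x_2)=(x_1-x_2,y,x_2)$ and land on the difference map $\tilde d$ restricted to the zero section, whereas the paper uses $\mu(x,\xi,y)=(x+y,\xi)$ and identifies $m^*[e(S)]$ with the graph of~$[-1]_X$.
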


(Note that $[-1]^* = [-1]_*$.)

\begin{remark}
The image of $(\sF^t \circ \sF)\bigl(\alpha\bigr)$ in $\CH(X;\bbQ)$ equals $(2g+d)!^2\cdot (\cF\circ \cF^t)\bigl(\alpha\bigr)$. The image of $\sum_{j=0}^d\; \frac{T_d}{T_j} \cdot \fTd_j(\cH)$ in $\CH(S;\bbQ)$ equals $T_d \cdot \Td(\cH)$. Formula~\eqref{eq:FtFformula} is therefore an integral version of the relation
\[
(\cF^t\circ \cF)\bigl(\alpha\bigr) \cdot \pi^*\bigl(\Td(\cH)\bigr) = (-1)^g\cdot [-1]_X^*(\alpha)\, .
\]
in $\CH(X;\bbQ)$, which holds for arbitrary abelian schemes, without any restriction on the characteristic of the base field.

Of course, the better known identity is that $\cF^t \circ \cF = (-1)^g \cdot [-1]_X^*$ in $\CH(X;\bbQ)$, which was proven by Deninger and Murre~\cite{DenMurre}. As explained in~\cite{BM-RPDM}, it is an immediate consequence of the Deninger--Murre result that $\Td(\cH) = 1$ in $\CH(S;\bbQ)$, and if  there exists an \'etale isogeny $X\to X^t$ (which is automatic in characteristic~$0$) then it follows that $c_i(\cH) = 0$ in $\CH(S;\bbQ)$ for all $i\geq 1$. For principally polarized abelian schemes, this last result was first proven by van der Geer~\cite{vdG} in a different way. We shall return to the vanishing of~$\Td(\cH)-1$ in Theorem~\ref{thm:MainLambda}.
\end{remark}

\begin{proof}[Proof of Theorem~\ref{thm:FtF}]
Define $\mu \colon X\times_S X^t\times_S X \to X\times_S X^t$ by $(x,\xi,y) \mapsto (x+y,\xi)$. By the Theorem of the Cube we have $\pi_{12}^*(\cP) \otimes \pi_{23}^*(\cP^t) \cong \mu^*\cP$ as line bundles on $X \times X^t \times X$. This gives $\pi_{12}^*(\wp) + \pi_{23}^*(\wp^t) = \mu^*(\wp)$, and hence
\begin{equation}\label{eq:pi12pi23}
\begin{aligned}
\pi_{12}^*(\gamma) \cdot \pi_{23}^*(\gamma^t) 
&= \Bigl(\sum_{i=0}^{2g+d}\; \tfrac{(2g+d)!}{i!}\cdot \pi_{12}^*(\wp)^i\Bigr) \cdot \Bigl(\sum_{j=0}^{2g+d}\; \tfrac{(2g+d)!}{j!}\cdot \pi_{23}^*(\wp^t)^j\Bigr)\\
&= \sum_{i,j=0}^\infty\; \tfrac{(2g+d)!^2}{i!j!}\cdot \pi_{12}^*(\wp)^i \cdot \pi_{23}^*(\wp^t)^j\\
&= \sum_{n=0}^\infty \; \tfrac{(2g+d)!^2}{n!} \cdot \bigl(\pi_{12}^*(\wp)^i + \pi_{23}^*(\wp^t)^j\bigr)^n \\
&= (2g+d)! \cdot \sum_{n=0}^\infty \; \tfrac{(2g+d)!}{n!} \cdot \bigl(\mu^*(\wp)\bigr)^n = (2g+d)! \cdot \mu^*(\gamma)\, .
\end{aligned}
\end{equation}
We find that $\sF^t \circ \sF$ is given by the correspondence $(2g+d)! \cdot \pi_{13,*}\mu^*(\gamma)$. Because the diagram
\begin{equation}\label{eq:mpi13}
\begin{tikzcd}[column sep=large]
X \times_S X^t \times_S X \ar[r,"\mu"] \ar[d,"\pi_{13}"'] & X \times_S X^t \ar[d,"\pr_1"] \\
X \times_S X \ar[r,"m"'] & X
\end{tikzcd}
\end{equation}
is Cartesian, this is the same as $(2g+d)! \cdot m^* \pr_{1,*}(\gamma)$. The LHS is therefore the result of applying the correspondence
\[
m^*\left(\sum_{i+j\leq g+d}\; \frac{N\cdot (2g+d)!^2\cdot T_d}{T_{g+i+j}}\cdot \frac{T_{g+i+j}}{(g+i)!\cdot T_j} \cdot \pr_{1,*}(\wp^{g+i}) \cdot \pi^*\fTd_j(\cH)\right)
\]
to $\alpha$; note that by our choice of~$N$ the factor $(N\cdot (2g+d)!^2\cdot T_d)/T_{g+i+j}$ is always an integer. By Proposition~\ref{prop:keyformula}, this correspondence is the same as
\[
(-1)^g\cdot N\cdot (2g+d)!^2\cdot T_d \cdot m^*\bigl[e(S)\bigr]\, .
\]
Because the diagram 
\[
\begin{tikzcd}[column sep=large]
X \ar[r,"{(1,-1)}"] \ar[d,"\pi"'] & X \times_S X \ar[d,"m"] \\
S \ar[r,"e"'] & X
\end{tikzcd}
\]
is Cartesian, $m^* [e(S)]$ equals the class of the graph of~$[-1]_X$ in $\CH(X\times_S X)$, and this gives the result.
\end{proof}

\begin{proposition}\label{prop:sFprops}
Let notation and assumptions be as above.
\begin{enumerate}
\item\label{sFx*y} For $x$, $y \in \CH(X)$ we have the relation
\[
(2g+d)! \cdot \sF(x\star y) = \sF(x) \cdot \sF(y)\, .
\]

\item\label{ftsF} Let $f \colon X\to Y$ be a homomorphism of abelian schemes over~$S$, with dual $f^t \colon Y^t \to X^t$. Let $g_X = \dim(X/S)$ and $g_Y = \dim(Y/S)$. If $g_X \geq g_Y$ then 
\[
f^{t,*}\bigl(\sF_X(\alpha)\bigr) = \tfrac{(2g_X+d)!}{(2g_Y+d)!} \cdot \sF_Y\bigl(f_*(\alpha)\bigr)
\]
in $\CH(Y^t)$, for all $\alpha \in \CH(X)$. If $g_X \leq g_Y$ then
\[
\tfrac{(2g_Y+d)!}{(2g_X+d)!} \cdot f^{t,*}\bigl(\sF_X(\alpha)\bigr) = \sF_Y\bigl(f_*(\alpha)\bigr)
\]
in $\CH(Y^t)$, for all $\alpha \in \CH(X)$.
\end{enumerate}
\end{proposition}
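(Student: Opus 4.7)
The plan for both parts is to mimic the style of the proof of Theorem~\ref{thm:FtF}: expand $\gamma = (2g+d)!\cdot \ch(\cP)$ as a polynomial in~$\wp$, apply the Theorem of the Cube to pulled-back Poincar\'e classes, and then use base change and the projection formula. The rational-coefficients versions of (1) and~(2) are standard; the issue is tracking the factorial denominators.

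For (1), work on $W := X\times_S X\times_S X^t$ with its projections $p_i, p_{ij}$, and let $m_{12}\colon W\to X\times_S X^t$ be $(x_1,x_2,\xi)\mapsto(x_1+x_2,\xi)$. Starting from $x\star y = m_*(x\times y)$, base change across the Cartesian square built from $m$ and $\pr_X$, together with the projection formula, yields $\sF(x\star y) = p_{3,*}\bigl(m_{12}^*(\gamma)\cdot p_1^*(x)\cdot p_2^*(y)\bigr)$. The Theorem of the Cube gives $m_{12}^*(\wp) = p_{13}^*(\wp) + p_{23}^*(\wp)$, and the binomial calculation carried out in~\eqref{eq:pi12pi23} then shows $p_{13}^*(\gamma)\cdot p_{23}^*(\gamma) = (2g+d)!\cdot m_{12}^*(\gamma)$. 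Using $p_i^* = p_{i3}^*\pr_X^*$ for $i=1,2$ and multiplying by $(2g+d)!$, this becomes
\[
(2g+d)!\cdot\sF(x\star y) = p_{3,*}\bigl(p_{13}^*(\gamma\cdot\pr_X^*x)\cdot p_{23}^*(\gamma\cdot\pr_X^*y)\bigr).
\]
On the other hand, the projection formula combined with base change across the fiber square $W = (X\times_S X^t)\times_{X^t}(X\times_S X^t)$ rewrites $\sF(x)\cdot\sF(y)$ in exactly the same form, proving~(1).

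For (2), the key input is that the defining property $(\id\times f^t)^*\cP_X\cong (f\times\id)^*\cP_Y$ of the dual morphism gives $(f\times\id)^*\wp_Y = (\id\times f^t)^*\wp_X$ in $\CH^1(X\times_S Y^t)$. Raising both sides to the $i$-th power, the crucial (but easy) observation is that $\wp_X^i = 0$ on $X\times_S X^t$ for $i>2g_X+d$, whence $((f\times\id)^*\wp_Y)^i = 0$ on $X\times_S Y^t$ also for $i>2g_X+d$, even when $g_Y$ is larger than $g_X$. Expanding both $(f\times\id)^*\gamma_Y$ and $(\id\times f^t)^*\gamma_X$ as polynomials in this common pulled-back class and comparing coefficients then yields the integral identity
\[
(2g_X+d)!\cdot (f\times\id)^*\gamma_Y = (2g_Y+d)!\cdot (\id\times f^t)^*\gamma_X \quad\text{in }\CH(X\times_S Y^t).
\]
Multiplying by $\pr_X^*(\alpha)$ and pushing forward via $\pr_{Y^t}\colon X\times_S Y^t\to Y^t$ turns the left-hand side into $(2g_X+d)!\cdot \sF_Y(f_*(\alpha))$ (by the projection formula along $f\times\id$ and base change for $f, \pr_Y$) and the right-hand side into $(2g_Y+d)!\cdot f^{t,*}\sF_X(\alpha)$ (by base change for $f^t, \pr_{X^t}$). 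Dividing both sides by the smaller of the two factorials then yields the asserted formulas in each of the cases $g_X\geq g_Y$ and $g_X\leq g_Y$.

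The step that most deserves care is the vanishing $((f\times\id)^*\wp_Y)^i = 0$ for $i>2g_X+d$ on $X\times_S Y^t$: this fact is what allows us to treat both inequalities $g_X\geq g_Y$ and $g_X\leq g_Y$ via a single algebraic identity on $X\times_S Y^t$, and without it, the case $g_X<g_Y$ would require a separate argument involving the abelian subscheme $f(X)\subseteq Y$ and compatibility of Poincar\'e bundles under the dual surjection $Y^t\twoheadrightarrow f(X)^t$. Everything else is formal base change, projection formula, and Cube-theorem manipulations patterned directly on the proof of Theorem~\ref{thm:FtF}.
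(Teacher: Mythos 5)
Your proposal is correct and follows essentially the same route as the paper: part (1) is the same Theorem-of-the-Cube computation combined with base change and the projection formula (only with the factors of the triple fibre product reordered), and part (2) rests on the same pullback relation between $\gamma_X$ and $\gamma_Y$ on $X\times_S Y^t$, for which you helpfully spell out the vanishing of the powers of the common class beyond $\min\{2g_X+d,\,2g_Y+d\}$ that the paper leaves implicit. One cosmetic caveat: ``dividing by the smaller factorial'' should be read as the coefficientwise comparison directly yielding the identity with the integer ratio $\tfrac{(2g_X+d)!}{(2g_Y+d)!}$ (or its reciprocal case) on the appropriate side, since one cannot literally divide in the integral Chow group; this is exactly how the paper phrases the relation.
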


\begin{proof}
\ref{sFx*y} Recall that $x\star y = m_*(x\times y)$, so diagram~\eqref{eq:mpi13} gives $\pr_1^*(x\star y) = \mu_*\pi_{13}^*(x\times y) = \mu_*\bigl(\pi_1^*(x) \cdot \pi_3^*(y) \bigr)$. Using the projection formula and~\eqref{eq:pi12pi23}, this gives
\[
(2g+d)! \cdot \pr_1^*(x\star y) \cdot \gamma = (2g+d)! \cdot \mu_*\Bigl(\pi_1^*(x) \cdot \pi_3^*(y) \cdot \mu^*(\gamma) \Bigr) = \mu_*\Bigl(\pi_1^*(x) \cdot \pi_3^*(y) \cdot \pi_{12}^*(\gamma) \cdot \pi_{23}^*(\gamma^t) \Bigr)\, .
\]
By definition, $(2g+d)! \cdot \sF(x\star y)$ is the image under $\pr_{2,*}$ of this expression. Note that $\pr_2 \circ \mu = \pi_2 = \pr_2 \circ \pi_{12}$ and that $\pi_1^*(x) = \pi_{12}^*\pr_1^*(x)$ and $\pi_3^*(y) = \pi_{23}^*\pr_2^*(y)$. This gives
\[
(2g+d)! \cdot \sF(x\star y) = \pr_{2,*}\Bigl(\pr_1^*(x) \cdot \gamma \cdot \pi_{12,*}\pi_{23}^*\bigl(\pr_2^*(y) \cdot \gamma^t\bigr)\Bigr)\, .
\]
Because the diagram
\[
\begin{tikzcd}
X\times X^t \times X \ar[r,"\pi_{12}"] \ar[d,"\pi_{23}"'] & X\times X^t \ar[d,"\pr_2"]\\
X^t \times X \ar[r,"\pr_1"] & X^t
\end{tikzcd}
\]
is Cartesian, it follows that 
\[
(2g+d)! \cdot \sF(x\star y) = \pr_{2,*}\Bigl(\pr_1^*(x) \cdot \gamma \cdot \pr_2^*\pr_{1,*}\bigl(\pr_2^*(y) \cdot \gamma^t\bigr)\Bigr) = \pr_{2,*}(\pr_1^*(x) \cdot \gamma) \cdot \pr_{1,*}(\pr_2^*(y) \cdot \gamma^t) = \sF(x) \cdot \sF(y)\, ,
\]
as claimed.

\ref{ftsF} We have $(f\times \id)^*\cP_Y \cong (\id\times f^t)^* \cP_X$ as line bundles on $X\times_S Y^t$. This gives the relation
\[
\begin{cases}
(\id \times f^t)^* \gamma_X = \tfrac{(2g_X+d)!}{(2g_Y+d)!} \cdot (f\times \id)^* \gamma_Y & \text{if $g_X \geq g_Y$,}\\
\tfrac{(2g_Y+d)!}{(2g_X+d)!} \cdot (\id \times f^t)^* \gamma_X = (f\times \id)^* \gamma_Y & \text{if $g_X \leq g_Y$.}
\end{cases}
\]
In the first case, write $q = \frac{(2g_X+d)!}{(2g_Y+d)!}$. The diagram
\[
\begin{tikzcd}[row sep=small]
&& X\times_S Y^t \ar[dl,"\id\times f^t"'] \ar[dr,"\pr_2"'] \ar[rr,"f\times \id"] && Y \times_S Y^t \ar[dl,"\pr_2"]\\
& X\times_S X^t \ar[dl,"\pr_1"] \ar[dr,"\pr_2"'] && Y^t\ar[dl,"f^t"]\\
X && X^t &
\end{tikzcd}
\]
then gives
\begin{multline*}
f^{t,*}\bigl(\sF_X(\alpha)\bigr) = f^{t,*} \pr_{2,*}\bigl(\pr_1^*(\alpha) \cdot \gamma_X\bigr) = \pr_{2,*}\Bigl(\pr_1^*(\alpha) \cdot (\id\times f^t)^*(\gamma_X)\Bigr) =\\
q \cdot \pr_{2,*} (f\times \id)_*\Bigl(\pr_1^*(\alpha)\cdot  (f\times \id)^*(\gamma_Y) \Bigr) = q \cdot \pr_{2,*}\Bigl((f\times \id)_*\pr_1^*(\alpha) \cdot \gamma_Y\Bigr) = \\
q \cdot \pr_{2,*}\Bigl(\pr_1^*\bigl(f_*(\alpha)\bigr) \cdot \gamma_Y \Bigr) = q \cdot  \sF_Y\bigl(f_*(\alpha)\bigr)\, .
\end{multline*}
The argument in the second case is entirely similar.
\end{proof}

\section{Fourier theory after inverting \texorpdfstring{$(2g+d+1)!$}{2g}}\label{sec:Fourier}

In all of this section, we retain the notation and assumptions of the previous section, and we assume that for all abelian schemes that we consider, at least one of the conditions in~\ref{subsec:charL} is satisfied. 

\subsection{}
For $\pi\colon X\to S$ an abelian scheme as before, define
\[
\Lambda = \bbZ\Bigl[\frac{1}{(2g+d+1)!}\Bigr]\, .
\]
We can then define
\[
\cF = \cF_X \colon \CH(X;\Lambda) \to \CH(X^t;\Lambda)\quad\text{by}\quad
\cF(\alpha) = (2g+d)!^{-1} \cdot \sF_X(\alpha)\, .
\]
This refines the classical Fourier transform. We write $\cF^t$ for~$\cF_{X^t}$. 

It follows from Theorem~\ref{thm:FtF} that for $\alpha \in \CH(X;\Lambda)$ we have
\begin{equation}\label{eq:FtFTdH}
(\cF^t\circ \cF)\bigl(\alpha\bigr) \cdot \pi^*\Td(\cH) = (-1)^g \cdot [-1]^*(\alpha)\, ,
\end{equation}
in $\CH(X;\Lambda)$, where we recall that $\cH = \cH^1_\dR(X/S)$, and where $\Td(\cH)$ is the Todd class of~$\cH$ as an element of~$\CH(S;\Lambda)$. (It follows from Lemma~\ref{lem:Nh!2} that $T_d \in \Lambda^*$, so $\Td(\cH) \in \CH(S;\Lambda)$ is well-defined.) In particular, for $\alpha \in \CH^i(X;\Lambda)$ we have
\begin{equation}\label{eq:FtFmodCH>i}
(\cF^t\circ \cF)\bigl(\alpha\bigr) = (-1)^g \cdot [-1]^*(\alpha) + \beta\, ,\quad \text{for some $\beta \in \CH^{>i}(X;\Lambda)$.}
\end{equation}
As we shall see below, we in fact have $\Td(\cH) = 1$ and $\beta = 0$. (The pattern of the arguments is the same as in~\cite{DenMurre}.) 
\medskip

Our next goal is to define a version of Beauville's decomposition (see~\cite{BeauvChow}) with coefficients in the ring~$\Lambda$. We start with two lemmas that are inspired by Beauville's calculations in~\cite{BeauvFourier}.

\begin{lemma}\label{lem:n*x=n2i-sx}
Let $x$ be an element of~$\CH^i(X;\Lambda)$. Suppose there exists an integer~$s$ with $i-2g\leq s \leq 2i$ such that $[n]^*(x) = n^{2i-s} \cdot x$ for all $n \in \bbZ$. Then $\cF(x) \in \CH^{g-i+s}(X^t;\Lambda)$.
\end{lemma}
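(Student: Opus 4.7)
The plan is to decompose the Fourier kernel according to codimension and exploit the scaling behaviour of $\wp = c_1(\cP)$ under multiplication-by-$n$ maps. Write $\gamma_X = \sum_{k=0}^{2g+d}\gamma_k$ with $\gamma_k = \frac{(2g+d)!}{k!}\wp^k \in \CH^k(X\times_S X^t)$, and correspondingly set $\cF_k(x) = (2g+d)!^{-1}\pr_{X^t,*}\bigl(\gamma_k \cdot \pr_X^*(x)\bigr)$, so that $\cF(x) = \sum_k \cF_k(x)$ with each summand living in $\CH^{i+k-g}(X^t;\Lambda)$. Since different values of~$k$ give distinct codimensions, it suffices to prove that $\cF_k(x) = 0$ whenever $k \neq 2g - 2i + s$.

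The key input is the isomorphism $(\id\times [n])^*\cP \cong \cP^{\otimes n} \cong ([n]\times \id)^*\cP$ of the Poincar\'e bundle, giving $([n]\times \id)^*\gamma_k = n^k \gamma_k$. For nonzero~$n$, the morphism $\phi = [n]\times \id$ is finite flat of degree~$n^{2g}$, so $\phi_*\phi^* = n^{2g}$; combined with $\pr_{X^t}\circ \phi = \pr_{X^t}$, this gives $\pr_{X^t,*}(\phi^*\gamma_X \cdot \phi^*\pr_X^*(x)) = n^{2g}\,\pr_{X^t,*}(\gamma_X\cdot\pr_X^*(x))$. On the other hand, using $\phi^*\gamma_k = n^k\gamma_k$ together with the hypothesis $\phi^*\pr_X^*(x) = \pr_X^*[n]^*(x) = n^{2i-s}\pr_X^*(x)$, the same left-hand side equals $n^{2i-s}\sum_k n^k\, \pr_{X^t,*}(\gamma_k\cdot \pr_X^*(x))$. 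Dividing by $(2g+d)!$ (a unit in $\Lambda$) and separating by codimension yields, for every $k$ and every nonzero $n\in\bbZ$,
\[
(n^{2g} - n^{k+2i-s})\,\cF_k(x) = 0 \qquad \text{in $\CH^{i+k-g}(X^t;\Lambda)$.}
\]
For $k = 2g-2i+s$ the coefficient vanishes identically, and the corresponding summand lies in $\CH^{g-i+s}(X^t;\Lambda)$, as claimed.

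For all other~$k$, set $\delta = |k + 2i - s - 2g|$ and let $G = \gcd\bigl\{n^{2g} - n^{k+2i-s} : n\in\bbZ\bigr\}$ in~$\bbZ$. If a prime~$p$ divides~$G$, then $n^\delta \equiv 1 \pmod p$ for every~$n$ coprime to~$p$, forcing $p-1 \mid \delta$ and hence $p \leq \delta+1$. The hypothesis $i - 2g \leq s \leq 2i$, combined with the range $\max\{0,g-i\} \leq k \leq 2g+d-i$ in which $\cF_k(x)$ can be nonzero, implies $\delta \leq 2g+d$; every prime divisor of~$G$ therefore lies in $\{2,\ldots,2g+d+1\}$, so $G$ is a unit in $\Lambda = \bbZ[1/(2g+d+1)!]$. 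Expressing~$1$ as a $\Lambda$-linear combination of the relations above then forces $\cF_k(x) = 0$. The main subtlety lies in this final arithmetic step: each individual relation only constrains torsion, and it is the sharp bound $\delta \leq 2g+d$ coming from the full Beauville range on~$s$ that makes the denominator $(2g+d+1)!$ appearing in~$\Lambda$ exactly sufficient to force vanishing.
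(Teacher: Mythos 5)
Your proof is correct. The setup agrees with the paper's: both decompose $\cF(x)$ into its codimension-graded pieces (your $\cF_k(x)$ are the paper's $y_k$) and derive the relations $\sum_k (n^{2g}-n^{k+2i-s})\cdot\cF_k(x)=0$ --- the paper gets them from its functoriality statement for $\sF$ under $[n]$ (Proposition~\ref{prop:sFprops}), while you re-derive the needed case directly from $\phi_*\phi^*=n^{2g}$ for $\phi=[n]\times\id$, valid for $n\neq 0$, which is all you use. Where you genuinely diverge is the endgame. The paper keeps the relations coupled as an infinite linear system in the $y_k$, reduces to coefficients in $\bbQ$ and in $\bbF_p$ for $p>2g+d+1$ using that $\sum_k\Lambda\cdot y_k$ is finitely generated over the PID $\Lambda$, and concludes by linear independence of characters. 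You instead split the relation into its graded components, so that each $\cF_k(x)$ with $k\neq 2g-2i+s$ is individually annihilated by every integer $n^{2g}-n^{k+2i-s}$, and you show the gcd of these annihilators is a unit in $\Lambda$: a common prime divisor $p$ forces $n^{\delta}\equiv 1\pmod{p}$ for all $n$ prime to~$p$, hence $p-1\mid\delta$ with $\delta=|k+2i-s-2g|\leq 2g+d$ (your bound checks out, using $0\leq k\leq 2g+d-i$ and $i-2g\leq s\leq 2i$), so $p\leq 2g+d+1$ is invertible in~$\Lambda$; the degenerate case $k+2i-s=0$ is also fine, since taking $n\equiv 0\pmod{p}$ there rules out any prime divisor at once. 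The arithmetic input is the same Fermat-type estimate the paper applies over~$\bbF_p$, but your version avoids the module-theoretic reduction entirely and produces an explicit Bezout identity killing each component, which is arguably cleaner; the paper's packaging is marginally more robust in that it never needs the equations to decouple by codimension.
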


\begin{proof}
Throughout we work with coefficients in the ring~$\Lambda$. We may assume $x\neq 0$, so that $0\leq i\leq g+d$. Write
\[
j_{\min} = \max\{0,g-i\}\, ,\quad j_{\max} = 2g+d-i\, .
\]
For $j \in \{j_{\min},\ldots,j_{\max}\}$, define 
\begin{equation}\label{eq:yjdef}
y_j = \pr_{X^t,*}\Bigl(\pr_X^*(x) \cdot \frac{\wp^j}{j!}\Bigr)\, .
\end{equation}
(Note that $j! \in \Lambda^*$.) Then $y_j \in \CH^{i+j-g}(X^t;\Lambda)$ and 
\[
\cF(x) = \sum_{j=j_{\min}}^{j_{\max}}\; y_j\, .
\] 
Moreover, $[n]^*(y_j) = n^j\cdot y_j$ for all $n \in \bbZ$. (Use that $[n]^* \circ \pr_{X^t,*} = \pr_{X^t,*} \circ (\id_X \times [n])^*$ together with the fact that $(\id_X \times [n])^*(\wp) = n\cdot \wp$.) The assumption that $[n]^*(x) = n^{2i-s} \cdot x$ gives that $n^{2i-s} \cdot [n]_*(x) = [n]_*[n]^*(x) = n^{2g}\cdot x$ for all~$n$. By Proposition~\ref{prop:sFprops}\ref{ftsF} it follows that $n^{2i-s} \cdot [n]^*\cF(x) = n^{2i-s} \cdot \cF\bigl([n]_*(x) \bigr) = n^{2g}\cdot \cF(x)$ in~$\CH(X^t;\Lambda)$. Hence,
\begin{equation}\label{eq:systeqs}
\sum_{j=j_{\min}}^{j_{\max}}\; \bigl(n^{2g} - n^{2i+j-s}\bigr) \cdot y_j = 0
\end{equation}
in $\CH(X^t;\Lambda)$, for all $n \in \bbZ$. Writing $J = \{j_{\min},\ldots,j_{\max}\} \setminus \{2g-2i+s\}$, this is an infinite system of linear equations in the~$y_j$, for $j \in J$, and the claim is that there is only the trivial solution $y_j = 0$ for all $j \in J$. (This then gives the lemma because it implies that $\cF(x) = y_{2g-2i+s} \in \CH^{g-i+s}(X^t;\Lambda)$.) Because $\Lambda$ is a PID and $Y = \sum_{j\in J}\; \Lambda \cdot y_j \subset \CH(X^t;\Lambda)$ is clearly finitely generated, it suffices to show that the system of linear equations~\eqref{eq:systeqs} only has the trivial solution in $Y\otimes \bbQ$ and in $Y\otimes \bbF_p$ for $p>2g+d+1$. Suppose there is nontrivial solution $(y_j)_{j\in J}$ to~\eqref{eq:systeqs} in~$Y\otimes \bbQ$. Then the set of all $n\in \bbQ$ for which \eqref{eq:systeqs} holds in $Y\otimes \bbQ$ is Zariski closed in~$\bbQ$ and contains~$\bbZ$; hence \eqref{eq:systeqs} holds for all $n \in \bbQ$; however, this contradicts the linear independence of characters (see~\cite{Stacks}, Lemma~0CKL). Similarly, if $p$ is a prime number with $p>2g+d+1$, the characters $n \mapsto n^{2i+j-s}$ for $j \in J$ and the character $n \mapsto n^{2g}$ are all distinct on~$\bbF_p$. (Here the point is that we cannot have $2i+j-s = 2g$ by definition of~$J$, and that the assumptions $p>2g+d+1$ and $i-2g\leq s$ together imply that $2i+j-s < 2g+(p-1)$.) So again it follows from the linear independence of characters that the system of equations~\eqref{eq:systeqs} has no nontrivial solution $(y_j)_{j\in J}$ in~$Y\otimes \bbF_p$.
\end{proof}

It follows from the proof that if $0 \neq x\in \CH^i(X;\Lambda)$ satisfies $[n]^*(x) = n^{2i-s} \cdot x$ for some~$i$ with $i-2g\leq s \leq 2i$ then necessarily 
\begin{equation}\label{eq:srange}
\max\{i-g,2i-2g\} \leq s \leq \min\{i+d,2i\}\, .
\end{equation}

\begin{lemma}\label{lem:(a)-(d)}
Let $x \in \CH^i(X;\Lambda)$ and let $s$ be an integer that satisfies~\eqref{eq:srange}. Then the following properties are equivalent.
\begin{enumerate}[label=\textup{(\alph*)}]
\item\label{cond(a)} $[n]^*(x) = n^{2i-s}\cdot x$ for all $n \in \bbZ$;

\item\label{cond(c)} $\cF(x) \in \CH^{g-i+s}(X^t;\Lambda)$;

\item\label{cond(d)} $\cF(x) \in \CH^{g-i+s}(X^t;\Lambda)$ and $[n]^*\bigl(\cF(x)\bigr) = n^{2g-2i+s} \cdot \cF(x)$ for all $n\in \bbZ$.
\end{enumerate}
Moreover, these conditions imply that $(\cF^t \circ \cF)\bigl(x\bigr) = (-1)^g\cdot [-1]^*(x)$.
\end{lemma}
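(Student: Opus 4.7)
The plan is to prove the cycle \textup{(a)} $\Rightarrow$ \textup{(d)} $\Rightarrow$ \textup{(c)} $\Rightarrow$ \textup{(d)} $\Rightarrow$ \textup{(a)}, with \textup{(d)} $\Rightarrow$ \textup{(c)} being tautological; the ``moreover'' assertion will fall out of the last implication.

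First, \textup{(a)} $\Rightarrow$ \textup{(d)} is essentially embedded in the proof of Lemma~\ref{lem:n*x=n2i-sx}: in the decomposition $\cF(x) = \sum_{j=j_{\min}}^{j_{\max}} y_j$ of~\eqref{eq:yjdef}, each $y_j$ lies in $\CH^{i+j-g}(X^t;\Lambda)$ and satisfies $[n]^*(y_j) = n^j \cdot y_j$. Under hypothesis~\textup{(a)}, that proof forces $y_j = 0$ for all $j \ne 2g-2i+s$, so $\cF(x) = y_{2g-2i+s}$ simultaneously lies in $\CH^{g-i+s}(X^t;\Lambda)$ and is an eigenvector for every $[n]^*$ with eigenvalue $n^{2g-2i+s}$; this is~\textup{(d)}. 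The implication \textup{(c)} $\Rightarrow$ \textup{(d)} comes from the same decomposition, which is defined for an arbitrary~$x$: since the summands $y_j$ live in pairwise distinct graded components of $\CH(X^t;\Lambda)$, the assumption $\cF(x) \in \CH^{g-i+s}(X^t;\Lambda)$ again forces $y_j = 0$ for $j \ne 2g-2i+s$, and the eigenvalue property of the surviving term gives~\textup{(d)}.

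The real work is in \textup{(d)} $\Rightarrow$ \textup{(a)}. My plan is to apply the implication \textup{(a)} $\Rightarrow$ \textup{(d)}, just proved, on the dual side: the class $\cF(x) \in \CH^{g-i+s}(X^t;\Lambda)$ plays the role of $x$ with index pair $(i',s') = (g-i+s,\,s)$, and a short check, using that $X^t/S$ has the same relative dimension~$g$, shows that $(i',s')$ satisfies the bounds~\eqref{eq:srange} on~$X^t$. This yields
\[
(\cF^t \circ \cF)(x) \in \CH^i(X;\Lambda)\, , \qquad [n]^*\bigl((\cF^t\circ \cF)(x)\bigr) = n^{2i-s} \cdot (\cF^t\circ \cF)(x)\, .
\]
I would then invoke the inversion formula~\eqref{eq:FtFTdH}. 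Because $\pi^*\Td(\cH) - 1 \in \CH^{>0}(X;\Lambda)$ and $(\cF^t\circ \cF)(x)$ is concentrated in codimension~$i$, the correction term $(\cF^t \circ \cF)(x) \cdot \pi^*\bigl(\Td(\cH)-1\bigr)$ lies in $\CH^{>i}(X;\Lambda)$. Projecting~\eqref{eq:FtFTdH} onto its codimension-$i$ component therefore gives
\[
(\cF^t \circ \cF)(x) = (-1)^g \cdot [-1]^*(x)\, ,
\]
which is exactly the ``moreover'' claim. Transferring the eigenvalue property of the left-hand side over to $[-1]^*(x)$, and using that $[-1]^*$ commutes with $[n]^*$ while $\bigl([-1]^*\bigr)^2 = \id$, I obtain $[n]^*(x) = n^{2i-s}\cdot x$, which is~\textup{(a)}.

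The main obstacle is this last step: it relies on the Fourier inversion formula~\eqref{eq:FtFTdH}, and hence on the full strength of the Pappas-type calculation behind Theorem~\ref{thm:FtF}. The more elementary combinatorial verification that $(g-i+s,\,s)$ still satisfies~\eqref{eq:srange} on $X^t$ is a short bookkeeping check with the bounds on~$s$, and everything else in the argument is repackaging of the decomposition $\cF(x) = \sum_j y_j$ already established in the proof of Lemma~\ref{lem:n*x=n2i-sx}.
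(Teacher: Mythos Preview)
Your proposal is correct and follows essentially the same route as the paper: both arguments note that the decomposition $\cF(x)=\sum_j y_j$ makes \textup{(c)} and \textup{(d)} equivalent to the single condition $\cF(x)=y_{2g-2i+s}$, obtain \textup{(a)} $\Rightarrow$ \textup{(d)} from Lemma~\ref{lem:n*x=n2i-sx}, and for \textup{(d)} $\Rightarrow$ \textup{(a)} apply that implication on the dual side and then compare with the inversion formula modulo higher codimension. The paper phrases the last step using~\eqref{eq:FtFmodCH>i} rather than~\eqref{eq:FtFTdH} directly, and does not spell out the bookkeeping that $(g-i+s,s)$ lies in the required range, but these are cosmetic differences.
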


\begin{proof}
If $x = 0$ there is nothing to prove, so we may assume $x\neq 0$. As in the proof of the previous lemma, write $\cF(x) = \sum_j\; y_j$ for $j = j_{\min},\ldots,j_{\max}$, with $y_j$ as in~\eqref{eq:yjdef}. Then \ref{cond(c)} and~\ref{cond(d)} are both equivalent to $\cF(x) = y_{2g-2i+s}$. The implication \ref{cond(a)} $\Rightarrow$~\ref{cond(c)} is the previous lemma. Next suppose \ref{cond(d)} holds. Applying the (now proven) implication \ref{cond(a)} $\Rightarrow$~\ref{cond(d)} to the element~$\cF(x)$, it follows that $(\cF^t \circ \cF)\bigl(x\bigr) \in \CH^i(X;\Lambda)$ and that $[n]^*\bigl((\cF^t \circ \cF)(x)\bigr) = n^{2i-s} \cdot (\cF^t \circ \cF)\bigl(x\bigr)$. Comparing this with~\eqref{eq:FtFmodCH>i}, it follows that $(\cF^t \circ \cF)\bigl(x\bigr) = (-1)^g\cdot [-1]^*(x)$, and hence that \ref{cond(a)} holds.
\end{proof}

\begin{definition}\label{def:CHi(s)}
For $s\in \bbZ$ satisfying~\eqref{eq:srange}, let 
\[
\CH^i_{(s)}(X;\Lambda) \subset \CH^i(X;\Lambda)
\]
be the $\Lambda$-submodule of elements~$x$ that satisfy the above equivalent conditions \ref{cond(a)}--\ref{cond(d)}. If $s$ does not satisfy~\eqref{eq:srange}, let $\CH^i_{(s)}(X;\Lambda)=0$.

We also define 
\[
\CH_{i,(s)}(X/S;\Lambda) = \CH^{g-i}_{(s)}(X;\Lambda)\, .
\]

Note that the subspaces $\CH^i_{(s)}(X;\Lambda)$, and hence also the $\CH_{i,(s)}(X/S;\Lambda)$, are stable under all operators~$[n]^*$ and~$[n]_*$.
\end{definition}

\begin{theorem}\label{thm:MainLambda}
Let notation and assumptions be as above.
\begin{enumerate}
\item\label{TdHFtF} We have $\Td(\cH) = 1$ in $\CH(S;\Lambda)$, and $\cF^t \circ \cF = (-1)^g\cdot [-1]^*$.

\item\label{Fa*b} For $\alpha$, $\beta \in \CH(X;\Lambda)$ we have the relations
\[
\cF(\alpha \star \beta) = \cF(\alpha) \cdot \cF(\beta)\, ,\qquad
\cF(\alpha \cdot \beta) = (-1)^g \cdot \cF(\alpha) \star \cF(\beta)\, .
\]

\item\label{BeauvDec} We have a decomposition
\[
\CH^i(X;\Lambda) = \bigoplus_{s=\max\{i-g,2i-2g\}}^{\min\{i+d,2i\}}\; \CH^i_{(s)}(X;\Lambda)\, ,
\]
and $\cF$ restricts to bijections $\CH^i_{(s)}(X;\Lambda) \isomarrow \CH^{g-i+s}_{(s)}(X^t;\Lambda)$.

\item\label{bigraded} The intersection product restricts to maps
\[
\CH^i_{(s)}(X;\Lambda) \times \CH^j_{(t)}(X;\Lambda) \to \CH^{i+j}_{(s+t)}(X;\Lambda)\, ,
\]
and the Pontryagin product restricts to maps
\[
\CH_{i,(s)}(X/S;\Lambda) \times \CH_{j,(t)}(X/S;\Lambda) \to \CH_{i+j,(s+t)}(X/S;\Lambda)\, .
\]
 
\item\label{homom} If $Y/S$ is an abelian scheme of relative dimension $h\leq g$ and $f \colon X\to Y$ is a homomorphism with dual $f^t\colon Y^t \to X^t$ then
\[
(f^t)^* \circ \cF_X = \cF_Y \circ f_* \colon \CH(X;\Lambda) \to \CH(Y^t;\Lambda)
\]
and
\[
\cF_X \circ f^* = (-1)^{g-h}\cdot f^t_*\circ \cF_Y \colon \CH(Y;\Lambda) \to \CH(X^t;\Lambda)\, .
\]

\end{enumerate}
\end{theorem}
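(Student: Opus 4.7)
The plan is to derive parts~(1) and~(3) simultaneously from the key identity~\eqref{eq:FtFTdH}, and then to obtain~(2), (4), (5) by formal manipulation. The core observation is that the codimension decomposition of $\cF(x)$ on~$X^t$ directly produces eigenvectors for the $[n]^*$-action, which Lemma~\ref{lem:(a)-(d)} then promotes into Beauville components via~$\cF^t$. The principal obstacle is this simultaneous extraction of~(1) and~(3); once the inversion identity $\cF^t \circ \cF = (-1)^g \cdot [-1]^*$ is in hand, everything else is essentially formal.

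\textbf{Joint proof of (1) and (3).} For $x \in \CH^i(X;\Lambda)$, decompose $\cF(x) = \sum_j y_j$ by codimension on~$X^t$, where $y_j = \pr_{X^t,*}\bigl(\pr_X^*(x) \cdot \wp^j/j!\bigr) \in \CH^{i+j-g}(X^t;\Lambda)$ and $[n]^* y_j = n^j y_j$. The range analysis in the proof of Lemma~\ref{lem:n*x=n2i-sx} forces $y_j = 0$ unless $2i+j-2g$ lies in the range~\eqref{eq:srange} for~$X^t$; for the surviving~$j$, Lemma~\ref{lem:(a)-(d)} applied on~$X^t$ yields $\cF^t(y_j) \in \CH^i_{(2i+j-2g)}(X;\Lambda)$. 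Summing, $\cF^t\cF(x) = \sum_j \cF^t(y_j)$ lies in $\CH^i(X;\Lambda)$. Now expand~\eqref{eq:FtFTdH} by codimension, using $\pi^*\Td(\cH) = 1 + \pi^*\Td_{\geq 1}(\cH)$: the codimension-$i$ piece reads $\cF^t\cF(x) = (-1)^g[-1]^*(x)$, proving the inversion identity, while the higher-codimension pieces give $[-1]^*(x) \cdot \pi^*\Td_{\geq 1}(\cH) = 0$ for every~$x$; specializing $x=[X]$ and applying~$e^*$ (a section of~$\pi^*$) yields $\Td(\cH) = 1$, finishing~(1). For~(3), set $x_{(s)} := (-1)^g[-1]^*\cF^t(y_{2g-2i+s})$; since $[-1]^*$ preserves Beauville eigenspaces, $x_{(s)} \in \CH^i_{(s)}(X;\Lambda)$, and the inversion identity gives $\sum_s x_{(s)} = x$. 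For uniqueness, given any Beauville decomposition $x = \sum_s x'_{(s)}$, Lemma~\ref{lem:(a)-(d)} shows $\cF(x'_{(s)}) \in \CH^{g-i+s}_{(s)}(X^t;\Lambda)$, so $\cF(x'_{(s)}) = y_{2g-2i+s}$ by codimension matching, and applying $(-1)^g[-1]^*\cF^t$ together with the inversion identity yields $x'_{(s)} = x_{(s)}$. This Fourier-based construction of projectors sidesteps any need to invert Vandermonde determinants over~$\Lambda$.

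\textbf{The remaining parts.} Part~(4) for intersection is immediate from $[n]^*(x \cdot y) = [n]^*x \cdot [n]^*y = n^{2(i+j)-(s+t)}(x\cdot y)$. For Pontryagin, we first establish that $[n]_* z = n^{2i+s} z$ for $z \in \CH_{i,(s)}(X/S;\Lambda)$: Lemma~\ref{lem:(a)-(d)} gives $\cF(z) \in \CH^{i+s}_{(s)}(X^t;\Lambda)$, hence $[n]^*_{X^t} \cF(z) = n^{2i+s}\cF(z)$; Proposition~\ref{prop:sFprops}\ref{ftsF} (applied to $f = [n]_X$) then gives $\cF([n]_* z) = n^{2i+s}\cF(z)$, and injectivity of $\cF$ (from the inversion identity) yields the claim. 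The standard identity $[n]_*(x\star y) = [n]_* x \star [n]_* y$, arising from $m \circ ([n]\times[n]) = [n]\circ m$ combined with the formula $(f\times g)_*(\alpha\times\beta) = f_*\alpha \times g_*\beta$, then gives the Pontryagin part of~(4). Part~(5) first formula is Proposition~\ref{prop:sFprops}\ref{ftsF} after dividing by the factor $(2g_X+d)!/(2g_Y+d)!$; the second follows by writing $\beta = (-1)^{g_Y}[-1]^*\cF^t_Y\cF_Y(\beta)$, applying $f^*$ and $\cF_X$, and using $\cF_X \circ [-1]^*_X = [-1]^*_{X^t}\circ \cF_X$ (itself the first formula of~(5) applied to $[-1]_X$) together with the inversion identity $\cF\circ\cF^t = (-1)^g[-1]^*$ on~$X^t$. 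For~(2), the first identity is Proposition~\ref{prop:sFprops}\ref{sFx*y} divided by~$(2g+d)!^2$; the second is obtained by applying~$\cF^t$ to both sides of the desired formula and reducing, via the first identity of~(2) on~$X^t$ and the inversion identity, to the tautology $[-1]^*(\alpha\cdot\beta) = [-1]^*(\alpha)\cdot [-1]^*(\beta)$.
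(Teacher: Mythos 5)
Your proposal is correct and follows essentially the same route as the paper: the inversion identity and the decomposition in~\ref{BeauvDec} are extracted jointly from~\eqref{eq:FtFTdH} by comparing codimensions after observing, via Lemma~\ref{lem:(a)-(d)} applied on~$X^t$, that $\cF^t\cF(x)$ lands in $\CH^i(X;\Lambda)$, and the remaining parts are the same formal deductions from Proposition~\ref{prop:sFprops} and duality. The only point where you deviate is the Pontryagin half of~\ref{bigraded}: the paper transports the statement through~$\cF$ to the intersection product on~$X^t$ using \ref{TdHFtF} and~\ref{Fa*b}, whereas you argue via $[n]_*$-eigenvalues; this works, but your final step --- that $[n]_*w = n^{2(i+j)+(s+t)}\cdot w$ for all~$n$ forces $w \in \CH_{i+j,(s+t)}(X/S;\Lambda)$ --- is the converse of the implication you established and needs one more line: decompose~$w$ using the already-proven part~\ref{BeauvDec} and invoke linear independence of characters as in the proof of Lemma~\ref{lem:n*x=n2i-sx}.
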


\begin{proof}
We first prove~\ref{BeauvDec} together with the second assertion of~\ref{TdHFtF}. Using property~\ref{lem:(a)-(d)}\ref{cond(c)}, it is immediate that the subspaces $\CH^i_{(s)}(X;\Lambda)$ are linearly independent. If $x \in \CH^i_{(s)}(X;\Lambda)$ then it follows from property~\ref{lem:(a)-(d)}\ref{cond(d)} that $\cF(x) \in \CH^{g-i+s}_{(s)}(X;\Lambda)$. Next, let $x \in \CH^i(X;\Lambda)$ be arbitrary and write $\cF(x) = \sum_j\; y_j$ as before. Then $y_j \in \CH^{i+j-g}_{(2i-2g+j)}(X^t;\Lambda)$, and by Lemma~\ref{lem:(a)-(d)} applied to~$X^t$ it follows that $\cF^t(y_j) \in \CH^i_{(2i-2g+j)}$. Because $(\cF^t\circ \cF)\bigl(x\bigr) = \sum \cF^t(y_j)$, it follows from \eqref{eq:FtFmodCH>i} that $(-1)^g \cdot [-1]^*(x) = (\cF^t\circ \cF)\bigl(x\bigr)$ and hence that $x$ lies in the span of the subspaces~$\CH^i_{(s)}(X;\Lambda)$.

The first assertion of~\ref{TdHFtF} now follows by taking $\alpha = [X]$ in~\eqref{eq:FtFTdH}. The first assertion of~\ref{homom} follows from Proposition~\ref{prop:sFprops}\ref{ftsF}, and the second then follows by using~\ref{TdHFtF} (both for $X$ and for~$Y$). Similarly, the first assertion of~\ref{Fa*b} follows from Proposition~\ref{prop:sFprops}\ref{sFx*y}, and the second then follows by duality.

For~\ref{bigraded}, finally, suppose $0 \neq \alpha \in \CH^i_{(s)}(X;\Lambda)$ and $0\neq \beta \in \CH^j_{(t)}(X;\Lambda)$. It is clear that $[n]^*(\alpha \cdot \beta) = n^{2(i+j)-(s+t)} \cdot (\alpha \cdot \beta)$ and that $(i+j)-2g \leq s+t \leq 2(i+j)$. By the remark after Lemma~\ref{lem:n*x=n2i-sx}, if $\alpha \cdot \beta \neq 0$ then we have $\max\{(i+j)-g,2(i+j)-2g\} \leq s+t \leq \min\{i+j+d,2(i+j)\}$, and in either case we find that $\alpha \cdot \beta \in \CH^{i+j}_{(s+t)}(X;\Lambda)$. The second assertion follows by using \ref{TdHFtF} and~\ref{Fa*b} (cf.\ the next remark).
\end{proof}

\begin{remark}
It readily follows from what we have shown that \ref{cond(a)}--\ref{cond(d)} in Lemma~\ref{lem:(a)-(d)} are also equivalent to the condition that $[n]_*(x) = n^{2g-2i+s}\cdot x$ for all $n \in \bbZ$. This means that 
\begin{equation}\label{eq:CHisn_*}
\CH_{i,(s)}(X/S;\Lambda) = \bigl\{\alpha \in \CH_i(X/S;\Lambda) \bigm| [n]_*(\alpha) = n^{2i+s} \cdot \alpha \quad \text{for all $n\in \bbZ$}\bigr\}\, .
\end{equation}
\end{remark}

\subsection{}\label{subsec:idealI}
Recall that the Pontryagin product makes $\CH_\bullet(X/S;\Lambda)$ into an algebra over $\CH(S;\Lambda)$, where the ring structure on~$\CH(S;\Lambda)$ is the intersection product and the structural homomorpism is $e_*\colon \CH(S;\Lambda) \to \CH_\bullet(X/S;\Lambda)$. Because $\pi_* \circ e_*$ is the identity on~$\CH(S;\Lambda)$ and $e_* \circ \pi_* = [0]_*$ on~$\CH(X;\Lambda)$, it follows from~\eqref{eq:CHisn_*}, taking $n=0$, that $e_*$ induces isomorphisms $\CH_j(S;\Lambda) \isomarrow \CH_{j-d,(2d-2j)}(X/S;\Lambda)$, with inverse given by~$\pi_*$. In particular, $\CH_{0,(0)}(X/S;\Lambda) = \Lambda \cdot \bigl[e(S)\bigr]$. Further, $\pi_*$ is zero on $\CH_{i,(s)}(X/S;\Lambda)$ whenever $2i+s \neq 0$, for in that case $e_*\circ \pi_* = [0]_*$ is zero, whereas $e_*$ is injective.

It is immediate from Theorem~\ref{thm:MainLambda}\ref{bigraded} that the space $\CH_0(X/S;\Lambda)$ of relative $0$-cycles is a $\Lambda$-subalgebra of $\CH_\bullet(X/S;\Lambda)$ for the Pontryagin ring structure, which is graded by
\[
\CH_0(X/S;\Lambda) = \bigoplus_{s=0}^{\min\{g+d,2g\}}\; \CH_{0,(s)}(X/S;\Lambda)\, .
\]
We define the augmentation ideal $I\subset \CH_0(X/S;\Lambda)$ to be the kernel of~$\pi_*$, i.e.,
\[
I = \Ker(\pi_*) = \bigoplus_{s>0}\; \CH_{0,(s)}(X/S;\Lambda)\, .
\]
(For the second equality, again take $n=0$ in~\eqref{eq:CHisn_*}.) Parts~\ref{BeauvDec} and~\ref{bigraded} of Theorem~\ref{thm:MainLambda} give the following result, which is the version with $\Lambda$-coefficients of a result that goes back to Bloch~\cite{Bloch}, Theorem~0.1.

\begin{corollary}\label{cor:Istarnu+1}
With notation as above, let $\nu = \min\{g+d,2g\}$. Then $I^{\star (\nu+1)} = 0$.
\end{corollary}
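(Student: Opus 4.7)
The plan is to combine two facts established in Theorem~\ref{thm:MainLambda}: the Beauville-type decomposition in part~\ref{BeauvDec}, which bounds the range of indices~$s$ for which $\CH_{0,(s)}(X/S;\Lambda)$ can be nonzero, and the bigrading property of the Pontryagin product from part~\ref{bigraded}, which says that the weights~$s$ add up under~$\star$. The argument is then essentially bookkeeping.

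First, using the translation $\CH_{i,(s)}(X/S;\Lambda) = \CH^{g-i}_{(s)}(X;\Lambda)$ from Definition~\ref{def:CHi(s)}, I would observe that
\[
\CH_{0,(s)}(X/S;\Lambda) = \CH^{g}_{(s)}(X;\Lambda)\, ,
\]
and the range of~$s$ in the Beauville decomposition of $\CH^g(X;\Lambda)$ is $\max\{0,0\} \leq s \leq \min\{g+d,2g\} = \nu$. Hence $\CH_{0,(s)}(X/S;\Lambda) = 0$ for all $s > \nu$.

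Next, by Theorem~\ref{thm:MainLambda}\ref{bigraded}, the Pontryagin product restricts to maps
\[
\CH_{0,(s_1)}(X/S;\Lambda) \times \CH_{0,(s_2)}(X/S;\Lambda) \tto \CH_{0,(s_1+s_2)}(X/S;\Lambda)\, ,
\]
so an iterated $\star$-product of $\nu+1$ elements $x_k \in \CH_{0,(s_k)}(X/S;\Lambda)$ lies in~$\CH_{0,(s_1+\cdots+s_{\nu+1})}(X/S;\Lambda)$. Since by the description of~$I$ in~\ref{subsec:idealI} we have $s_k \geq 1$ for every $k$, the total weight satisfies $s_1+\cdots+s_{\nu+1} \geq \nu+1 > \nu$, so the product lies in a piece that vanishes by the range restriction above.

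Since $I$ is the direct sum of the pieces $\CH_{0,(s)}(X/S;\Lambda)$ with $s\geq 1$, any element of~$I^{\star(\nu+1)}$ is a $\Lambda$-linear combination of $\star$-products of this type; hence $I^{\star(\nu+1)}=0$. There is no real obstacle here — the content has already been absorbed into Theorem~\ref{thm:MainLambda}, and the corollary is a purely combinatorial consequence of the compatibility of the Beauville grading with Pontryagin product together with the vanishing range for the top-dimensional piece.
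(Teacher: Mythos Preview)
Your proof is correct and is exactly the argument the paper has in mind: the text introduces the corollary by saying that parts~\ref{BeauvDec} and~\ref{bigraded} of Theorem~\ref{thm:MainLambda} give the result, and your write-up simply spells out that deduction. There is nothing to add.
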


\section{Some results about integral representations of~\texorpdfstring{$\fsl_2$}{sl}}

This section is an interlude on representations of the Lie algebra~$\fsl_2$. Proposition~\ref{prop:sl2rep} will be applied to Chow groups in the next section. These results are probably known, but we have not been able to locate them in the literature in the form that we require.

\subsection{}\label{subsec:SymnSt}
Let $g$ be a positive integer. In this section, $\Lambda$ will be any ring of the form $\Lambda = \bbZ\bigl[\frac{1}{n}\bigr]$ with $n$ divisible by~$(2g)!$. By $\fsl_2$ we mean the Lie algebra of that name over the ring~$\Lambda$. The underlying module is free of rank~$3$ with generators that we call $e$, $h$ and~$f$, and the Lie bracket is given by
\[
[h,e] = 2e\, ,\quad [e,f] = h\, ,\quad [h,f] = -2f\, .
\]
We shall be interested in representations $\rho\colon \fsl_2 \to \fgl(V)$, for $V$ a $\Lambda$-module, such that there is a grading $V = \oplus_{i \in \bbZ}\; V_i$ with the property that $\rho(h)$ is multiplication by~$i$ on~$V_i$. In that case, the first and third commutation relations just say that $\rho(e)$ is an endomorphism of degree~$+2$ and $\rho(f)$ is an endomorphism of degree~$-2$ with respect to the given grading. If it is clear from the context what we mean, we write $e$, $h$ and~$f$ instead of $\rho(e)$, $\rho(h)$ and~$\rho(f)$, and we call~$V$ an $\fsl_2$-module. Note that in general we do not assume the underlying $\Lambda$-module to be of finite type or to be torsion-free. 

If $V$ is an $\fsl_2$-module, we denote by~$V^*$ the $\fsl_2$-module that has the same underlying $\Lambda$-module, with $\fsl_2$-action given by the rules
\[
e_{V^*} = f_V\, ,\quad h_{V^*} = -h_V\, ,\quad f_{V^*} = e_V\, .
\]

\subsection{}
Let $\St$ denote the standard representation of~$\fsl_2$ over~$\Lambda$, by which we mean that $\St = \Lambda \cdot x_{-1} \oplus \Lambda\cdot x_1$ as a module over~$\Lambda$, with action of~$\fsl_2$ given, with respect to the basis~$\{x_{-1},x_1\}$ by
\[
e = \begin{pmatrix}0 & 0\\ 1 & 0 \end{pmatrix}\, ,\qquad
h = \begin{pmatrix}-1 & 0\\ 0 & 1 \end{pmatrix}\, ,\qquad
f = \begin{pmatrix}0 & 1\\ 0 & 0 \end{pmatrix}\, .
\]
We are interested in the symmetric powers~$\Sym^n(\St)$ of this representation, for $n\leq g$. (This restriction is crucial in many of our calculations.) Explicitly, the underlying $\Lambda$-module of~$\Sym^n(\St)$ is free of rank~$n+1$ with basis $x_{-n}, x_{-n+2},\ldots,x_n$, and the $\fsl_2$-action is given by the rules
\begin{equation}\label{eq:SymnExpl}
e(x_{-n+2i}) = (n-i) \cdot x_{-n+2i+2}\, ,\qquad 
h(x_i) = i\cdot x_i\, ,\qquad
f(x_{-n+2i}) = i\cdot x_{-n+2i-2}\, .
\end{equation}

\subsection{}\label{subsec:V=sumVi}
Let $V = \oplus_{i=-g}^g\; V_i$ where the $V_i$ are $\Lambda$-modules. Assume we have a representation of~$\fsl_2$ on~$V$ such that $h \in \fsl_2$ is multiplication by~$i$ on~$V_i$. For $j \in \{-g,\ldots,g\}$ define
\[
V_j[e] = \bigl\{v\in V_j\bigm| e(v) = 0\bigr\}\, ,\quad
V_j[f] = \bigl\{v\in V_j\bigm| f(v) = 0\bigr\}\, .
\]

\begin{lemma}\label{lem:flek}
Suppose $v \in V_{-n}[f]$ for some integer~$n$ with $0\leq n\leq g$.
\begin{enumerate}
\item\label{flek1} Let $k$ and~$\ell$ be integers with $0 \leq k \leq n$ and $\ell\geq 0$. Then
\[
f^\ell e^k(v) = \begin{cases}
\frac{(n-k+\ell)!\, k!}{(n-k)!\, (k-\ell)!}\cdot e^{k-\ell}(v) & \text{if $\ell\leq k$,}\\
0 & \text{if $\ell> k$.}
\end{cases}
\]

\item\label{flek2} We have $e^{n+1}(v) = 0$.

\item\label{flek3} For $1\leq n\leq g$ we have $V_n[f] = 0$.
\end{enumerate}
\end{lemma}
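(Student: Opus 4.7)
The plan is to prove the three parts in the order~\ref{flek1}, \ref{flek3}, \ref{flek2}, the central tool being the commutator identity
\[
[f, e^k] = -\sum_{i=0}^{k-1} e^i\, h\, e^{k-1-i},
\]
which follows by induction on~$k$ from $[f,e] = -h$. For~\ref{flek1}, applying this identity to $v\in V_{-n}[f]$ and using that $h$ acts on $e^{k-1-i}(v) \in V_{-n+2(k-1-i)}$ by the scalar $-n + 2(k-1-i)$, one computes $fe^k(v) = k(n-k+1)\,e^{k-1}(v)$. A straightforward induction on~$\ell$ that multiplies the successive scalars $(k-j+1)(n-k+j)$ for $j = 1, \ldots, \ell$ then yields the stated closed form when $\ell \le k$; for $\ell > k$ one observes that $f^k e^k(v)$ is a scalar multiple of~$v$ and is therefore killed by~$f$.

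For~\ref{flek3}, I take $w \in V_n[f]$ with $1 \le n \le g$ and run the analogous commutator computation. Now $w$ has weight~$n$ and is killed by~$f$, so the same identity yields $fe^k(w) = -k(n+k-1)\,e^{k-1}(w)$, and iterating gives
\[
f^k e^k(w) = (-1)^k\, k!\,\frac{(n+k-1)!}{(n-1)!}\, w.
\]
Taking $k = g$ forces $e^g(w) \in V_{n+2g} = 0$ since $n \ge 1$, so the left-hand side vanishes. The remaining integer coefficient $g! \cdot n(n+1)\cdots(n+g-1)$ is a product of positive integers each at most $2g-1$; every prime dividing it is therefore at most $2g-1$ and hence divides~$(2g)!$. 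Since $(2g)!$ is invertible in~$\Lambda$ by hypothesis, this coefficient is a unit in~$\Lambda$, which forces $w = 0$.

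Part~\ref{flek2} then drops out of the previous two: for $v\in V_{-n}[f]$ the commutator identity gives $[f,e^{n+1}](v) = -\bigl(\sum_{i=0}^n(n-2i)\bigr)\,e^n(v) = 0$, and combining this with $f(v) = 0$ shows $e^{n+1}(v) \in V_{n+2}[f]$. If $n+2 > g$ then $V_{n+2} = 0$ by the grading assumption; otherwise $1 \le n+2 \le g$ and part~\ref{flek3} applies. Either way $e^{n+1}(v) = 0$. The principal obstacle in the argument lies in~\ref{flek3}: one must extract a nontrivial relation from the bounded grading and verify that its coefficient is a unit in~$\Lambda$, which is exactly what the hypothesis on~$(2g)!$ is tailored to accommodate.
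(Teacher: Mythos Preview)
Your argument is correct. The route differs from the paper's in two places worth noting.

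For part~\ref{flek3}, the paper chooses the minimal exponent $k=\lfloor(g+2-n)/2\rfloor$ needed to push $e^k(w)$ past weight~$g$, whereas you simply take $k=g$. Both choices make the coefficient $k!\,(n+k-1)!/(n-1)!$ a unit in~$\Lambda$ (all prime factors are at most $2g-1$ in your case, at most~$g$ in the paper's), so this is a cosmetic difference; your choice is the cleaner one.

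The more substantive difference is the order: the paper proves~\ref{flek2} directly and independently of~\ref{flek3}, by extending the recursion of~\ref{flek1} to the range $k>n$ (establishing $fe^{n+1+k}(v)=-k(n+k+1)\,e^{n+k}(v)$) and then arguing that the maximal~$N$ with $e^N(v)\neq 0$ must equal~$n$. You instead prove~\ref{flek3} first and deduce~\ref{flek2} from it via the neat observation that $e^{n+1}(v)\in V_{n+2}[f]$, which is forced to vanish either by the grading bound or by~\ref{flek3}. Your ordering avoids the extra induction and the case analysis on~$N$; the paper's ordering has the mild advantage that~\ref{flek2} stands on its own, but in the present context nothing is lost by your route.
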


\begin{proof} \ref{flek1} We argue by double induction. The case $k=0$ is trivial. Assume then that $1\leq k\leq n$ and that the assertion is true for smaller values of~$k$. For $\ell=0$ the assertion is again trivial. If $1\leq \ell \leq k-1$, we have
\[
f^\ell h e^{k-1}(v) = (-n+2k-2) \cdot f^\ell e^{k-1}(v) = (-n+2k-2) \cdot \frac{(n-k+1+\ell)!\, (k-1)!}{(n-k+1)!\, (k-1-\ell)!} \cdot e^{k-1-\ell}(v)\, .
\]
On the other hand, $h= [e,f]$ gives that
\begin{align*}
f^\ell h e^{k-1}(v) &= f^\ell efe^{k-1}(v) - f^{\ell+1}e^k(v)\\
&= \frac{(n-k+1+1)!\, (k-1)!}{(n-k+1)!\, (k-2)!}\cdot f^\ell e^{k-1}(v)  - f^{\ell+1}e^k(v)\\
&= (n-k+2)(k-1) \cdot \frac{(n-k+1+\ell)!\, (k-1)!}{(n-k+1)!\, (k-1-\ell)!} \cdot e^{k-1-\ell}(v) - f^{\ell+1}e^k(v)\, .
\end{align*}
Comparing the two answers, we find $f^{\ell+1}e^k(v) = C\cdot e^{k-1-\ell}(v)$ with
\[
C = \bigl\{(n-k+2)(k-1) - (-n+2k-2)\bigr\} \cdot \frac{(n-k+1+\ell)!\, (k-1)!}{(n-k+1)!\, (k-1-\ell)!} = \frac{(n-k+1+\ell)!\, k!}{(n-k)!\, (k-1-\ell)!}\, ,
\]
as desired, and by induction we get the stated formula for all $\ell\leq k$. Finally, if $\ell = k$ the same method gives that $f^{k+1}e^k(v) = 0$.

\ref{flek2} First we show that 
\begin{equation}\label{eq:fen+1+k}
fe^{n+1+k}(v) = -k(n+k+1) \cdot e^{n+k}(v)
\end{equation}
for all $k\geq 0$. By part~\ref{flek1} we have $fe^n(v) = n\cdot e^{n-1}(v)$, and then
\[
n\cdot e^n(v) = he^n(v) = efe^n(v) - fe^{n+1}(v) = n \cdot e^n(v) - fe^{n+1}(v)
\]
gives the desired relation for $k=0$. (One checks that this also works if $n=0$; in this case $fe^0(v) = 0$.) We continue by induction, which is achieved by using that
\[
(n+2+2k) \cdot e^{n+1+k}(e) = he^{n+1+k}(v) = efe^{n+1+k}(v) - fe^{n+2+k}(v) = -k(n+k+1) \cdot e^{n+1+k} -  fe^{n+2+k}(v).
\]
This gives~\eqref{eq:fen+1+k}.

If $v\neq 0$, our assumptions on~$V$ imply that there exists some $N\geq 0$ such that $e^N(v)\neq 0$ and $e^{N+1}(v) = 0$. It follows from part~\ref{flek1}, taking $k=\ell=n$, that we must have $N\geq n$. (Note that $n! \in \Lambda^*$.) By \eqref{eq:fen+1+k}, if $N>n$ then $e^{N+1}(v) = 0$ implies that $e^N(v)=0$. Hence we must have $N=n$, so $e^{n+1}(v) = 0$.

\ref{flek3} Suppose $v \in V_n[f]$. With the same arguments as in the proof of~\ref{flek1}, we find that for all $0\leq \ell\leq k\leq \lfloor\frac{g+2-n}{2}\rfloor$ we have
\[
f^\ell e^k(v) = (-1)^\ell \cdot \frac{(n+k-1)!\, k!}{(n+k-1-\ell)!\, (k-\ell)!} \cdot e^{k-\ell}(v)\, .
\]
Now take $k=\ell = \lfloor\frac{g+2-n}{2}\rfloor$. Then $e^k(v) = 0$ because $e^k(v) \in V_{n+2k}$ but $n+2k > g$. On the other hand, $f^ke^k(v)= C\cdot v$ with $C = \frac{(n+k-1)!\, k!}{(n-1)!} \in \Lambda^*$. It follows that $v=0$.
\end{proof}

\begin{remark}
By applying the lemma to~$V^*$ instead of~$V$ we get analogous conclusions for $v\in V_n[e]$, as well as the conclusion that $V_{-n}[e] = 0$ for $1\leq n\leq g$.
\end{remark}

\begin{lemma}\label{lem:homog}
Let $V = \oplus_{i=-g}^g\; V_i$ be an $\fsl_2$-module as in~\ref{subsec:V=sumVi}, let $W \subset V$ be an $\fsl_2$-submodule, and for $i \in \{-g,\ldots,g\}$ define $W_i = W\cap V_i$. Then $W = \oplus_{i=-g}^g\; W_i$.
\end{lemma}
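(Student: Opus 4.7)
The plan is to use the $h$-action to isolate the homogeneous components of any element $w\in W$. Since $h$ acts as multiplication by $i$ on $V_i$, any $w\in W$ decomposed as $w=\sum_{i=-g}^{g} w_i$ with $w_i\in V_i$ satisfies $h^k(w)=\sum_{i=-g}^{g} i^k\, w_i$ for every $k\geq 0$, and all the elements $h^k(w)$ lie in $W$ because $W$ is $\fsl_2$-stable.

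First I would assemble the system for $k=0,1,\ldots,2g$ into the matrix equation
\[
\begin{pmatrix} w \\ h(w) \\ \vdots \\ h^{2g}(w) \end{pmatrix}
= M \begin{pmatrix} w_{-g} \\ w_{-g+1} \\ \vdots \\ w_g \end{pmatrix},
\qquad M_{k,i}=i^{k},
\]
so that $M$ is the $(2g+1)\times(2g+1)$ Vandermonde matrix on the nodes $-g,-g+1,\ldots,g$. Its determinant is $\prod_{-g\leq i<j\leq g}(j-i)$, a nonzero integer whose prime factors are all $\leq 2g$, hence dividing some power of $(2g)!$. By our standing hypothesis $\Lambda=\bbZ[\tfrac{1}{n}]$ with $(2g)!\mid n$, every integer in $\{1,2,\ldots,2g\}$ is a unit of~$\Lambda$, so $\det(M)\in \Lambda^*$ and $M$ is invertible over~$\Lambda$.

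Consequently, each $w_i$ can be written as a $\Lambda$-linear combination of the elements $h^0(w),h^1(w),\ldots,h^{2g}(w)$, all of which belong to~$W$. Therefore $w_i\in W$ for every $i$, which means $w_i\in W_i=W\cap V_i$. Since $w=\sum w_i$ was arbitrary, this shows $W=\bigoplus_{i=-g}^{g} W_i$.

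There is no real obstacle here; the only point that must be verified carefully is that the Vandermonde determinant really is invertible in~$\Lambda$, which uses precisely the hypothesis on~$\Lambda$ made in~\ref{subsec:SymnSt}. Note that we do not need $V$ to be finitely generated or torsion-free, because the argument expresses each $w_i$ directly as an explicit $\Lambda$-combination of the $h^k(w)$.
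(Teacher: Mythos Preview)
Your proof is correct and follows essentially the same approach as the paper: both use the Vandermonde matrix built from the powers $h^k(w)$ for $k=0,\ldots,2g$, observe that its determinant $\prod_{-g\le i<j\le g}(j-i)$ is a unit in~$\Lambda$ because $(2g)!\in\Lambda^*$, and conclude that each $w_i$ lies in~$W$.
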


\begin{proof}
Let $w$ be an element of~$W$, and write it as $w = \sum_{i=-g}^g\; w_i$ with $w_i \in V_i$. Then $\sum_{i=-g}^g\; i^n \cdot w_i = h^n(w) \in W$ for every $n\geq 0$. By our assumption on~$\Lambda$ we have $(2g)! \in \Lambda^*$, so
\[
\det\bigl(i^n\bigr)_{\substack{i=-g,\ldots,g\\n=0,\ldots,2g}} = \prod_{-g\leq i<j\leq g}\; (j-i)\, ,
\]
is a unit in~$\Lambda$. It follows that $w_i \in W$ for all~$i$.
\end{proof}

\begin{proposition}\label{prop:sl2rep}
Let $V = \oplus_{i=-g}^g\; V_i$ be an $\fsl_2$-module as in~\ref{subsec:V=sumVi}.
\begin{enumerate}
\item\label{sl2rep1} For $n \in \{0,\ldots,g\}$, the map $v \mapsto (n!)^{-1} \cdot e^n(v)$ defines an isomorphism of $\Lambda$-modules $V_{-n}[f] \isomarrow V_n[e]$, with inverse given by $v \mapsto (n!)^{-1} \cdot f^n(v)$. In particular, for all $i\leq n$ the maps $e^i \colon V_{-n}[f] \to V$ and $f^i \colon V_n[e]\to V$ are injective.

\item\label{sl2rep2} For all $i \in \{0,\ldots,n\}$ we have $e^iV_{-n}[f] = f^{n-i}V_n[e]$ as $\Lambda$-submodules of~$V_{-n+2i}$.

\item\label{sl2rep3} Define
\[
M_n = \sum_{i=0}^n\; e^iV_{-n}[f] \subset V\, .
\]
Then $M_n$ is the direct sum of its subspaces~$e^iV_{-n}[f]$, and $M_n$ is an $\fsl_2$-submodule of~$V$.

\item\label{sl2rep4} Let $x_{-n}, x_{-n+2},\ldots,x_n$ be the basis of~$\Sym^n(\St)$ as in~\ref{subsec:SymnSt}. Then the homomorphism
\[
\Sym^n(\St) \otimes_\Lambda V_{-n}[f] \to M_n
\]
given by $x_{-n+2i} \otimes v\mapsto \frac{(n-i)!}{n!} \cdot e^i(v)$ is an isomorphism of $\fsl_2$-modules.

\item\label{sl2rep5} The natural map
\[
\phi \colon \bigoplus_{n=0}^g\; M_n \to V
\]
is an isomorphism of $\fsl_2$-modules.
\end{enumerate}
\end{proposition}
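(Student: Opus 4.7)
The plan is to treat the five parts in order; parts~(1)--(4) will follow from direct manipulation of Lemma~\ref{lem:flek}, while the main work will be in part~(5). For~(1), Lemma~\ref{lem:flek}\ref{flek1} with $k=\ell=n$ gives $f^ne^n(v)=(n!)^2 v$ on~$V_{-n}[f]$, and applying the same lemma to the dual module~$V^*$ (which interchanges $e$ and~$f$) yields $e^nf^n(v)=(n!)^2 v$ on~$V_n[e]$. Together with Lemma~\ref{lem:flek}\ref{flek2}, which ensures $e^nV_{-n}[f]\subseteq V_n[e]$, this gives the desired mutually inverse isomorphisms (since $n!\in\Lambda^*$); injectivity of each~$e^i$ with $i\le n$ then follows because $e^i(v)=0$ implies $e^n(v)=0$ and hence $v=0$. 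For~(2), Lemma~\ref{lem:flek}\ref{flek1} with $k=n$, $\ell=n-i$ yields $f^{n-i}e^n(v)=\frac{n!\,(n-i)!}{i!}\,e^i(v)$, and $\frac{(n-i)!}{i!}\in\Lambda^*$. Part~(3) is then formal: the sum is direct because the summands occupy distinct weight spaces, and closure under $h$, $e$, $f$ follows from the grading, Lemma~\ref{lem:flek}\ref{flek2}, and Lemma~\ref{lem:flek}\ref{flek1} with $\ell=1$ respectively. For~(4), each graded component of the map is a unit scalar times~$e^i$ on $V_{-n}[f]$, hence a $\Lambda$-isomorphism by~(1), and the $\fsl_2$-equivariance is a straightforward matching of the formulas~\eqref{eq:SymnExpl} against Lemma~\ref{lem:flek}\ref{flek1}.

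For the injectivity in part~(5), I plan to reduce to a single weight~$j$ via Lemma~\ref{lem:homog}. Suppose $\sum_n w_n=0$ with $w_n\in(M_n)_j$ and $w_N\ne 0$ for the largest such~$N$. Writing $w_n=e^{i_n}(v_n)$ with $v_n\in V_{-n}[f]$ and $i_n=(j+n)/2$, I apply $f^{i_N}$ to the relation: for $n<N$ one has $i_n<i_N$, and $f^{i_N}w_n=f^{i_N-i_n}\bigl(f^{i_n}w_n\bigr)$ vanishes because $f^{i_n}w_n\in V_{-n}[f]$ is $f$-killed, while for $n=N$ Lemma~\ref{lem:flek}\ref{flek1} gives $f^{i_N}w_N=\frac{N!\,i_N!}{(N-i_N)!}\,v_N$, a unit multiple of the nonzero~$v_N$---contradiction.

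For surjectivity I plan to analyze the cokernel $C=V/\phi(\bigoplus_n M_n)$, an $\fsl_2$-module with weights in~$[-g,g]$. First, $\phi$ restricts to an isomorphism on $f$-kernels: by~(4), $(M_n)^{f=0}=V_{-n}[f]$, while Lemma~\ref{lem:flek}\ref{flek3} forces $V^{f=0}=\bigoplus_{n\ge 0}V_{-n}[f]$, and $\phi$ is the identity on this sum. Applying Lemma~\ref{lem:flek}\ref{flek3} to~$V^*$ gives the analogous statement for $e$-kernels. Next, for $\bar v\in C^{f=0}_j$, write $fv=\phi(u)$ with $u=\sum_n u_n\in\bigoplus_n(M_n)_{j-2}$; if each $u_n$ lifts to some $\tilde u_n\in(M_n)_j$ with $f\tilde u_n=u_n$, then $v-\phi(\sum\tilde u_n)\in V^{f=0}\subseteq\phi(\bigoplus_n M_n)$, whence $\bar v=0$. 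From the $\Sym^n(\St)$-structure, $f\colon(M_n)_j\to(M_n)_{j-2}$ is an isomorphism whenever both sides are nonzero, so the only obstruction---where $(M_n)_{j-2}\ne 0$ but $(M_n)_j=0$---occurs at $n=j-2$, and this forces $j\ge 2$. Hence $C^{f=0}$ is concentrated in weights $\ge 2$, and symmetrically $C^{e=0}$ in weights $\le -2$. If $C\ne 0$, pick any nonzero homogeneous $c\in C_j$: since $f$ is locally nilpotent on~$C$, some iterate $f^kc$ is a nonzero element of $C^{f=0}$, forcing $j\ge 2k+2\ge 2$, while the analogous $e$-iteration forces $j\le -2$---a contradiction.

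The hardest step, I expect, will be the surjectivity in~(5): a naive weight induction (either upward from $-g$ or downward from~$g$) meets an obstruction at $j=\pm 2$ coming from a possibly nontrivial $V_0[f]$ or $V_0[e]$, and only the combined use of both $e$- and $f$-symmetries via the cokernel circumvents this.
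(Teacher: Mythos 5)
Your proof is correct. Parts~(1)--(4) and the injectivity half of~(5) follow essentially the paper's own route: everything is read off from Lemma~\ref{lem:flek} applied to $V$ and~$V^*$, and the kernel of~$\phi$ is homogenized via Lemma~\ref{lem:homog} and then killed using $f$-powers (the paper takes the \emph{lowest} weight occurring in the kernel, which forces $f(v)=0$ outright, where you take the largest $n$ with $w_n\neq 0$ and apply $f^{i_N}$; this is a cosmetic difference). The surjectivity half of~(5) is where you genuinely diverge. The paper runs a one-sided induction on the weight: for the smallest $s$ with $\Image(\phi)_s\neq V_s$ it decomposes $f(v)=\sum_n w_n$ with $w_n\in (M_n)_{s-2}$, produces $z=\sum_n c_n\, e(w_n)$ with $f(v-z)=0$, and concludes via $V_s[f]=0$ for $s\geq 1$. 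Your cokernel argument, using the two statements $C^{f=0}\subseteq\oplus_{j\geq 2}C_j$ and $C^{e=0}\subseteq\oplus_{j\leq -2}C_j$ symmetrically, buys something real here: the obstruction you isolate, namely the top weight space $(M_{j-2})_{j-2}$ on which $e$ vanishes, is precisely the term the paper's construction of~$z$ passes over --- for $s\geq 2$ and $n=s-2$ one has $e(w_{s-2})=e^{n+1}(y_{-n})=0$, so no constant $c_{s-2}\in\Lambda^*$ with $c_{s-2}\cdot fe(w_{s-2})=w_{s-2}$ can exist unless $w_{s-2}=0$, and the one-sided induction needs the additional (true but unproved there) observation that this component of $f(v)$ vanishes. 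Your symmetric use of both the $e$- and $f$-kernels avoids having to prove that. One small correction to your closing remark: the obstruction to a one-sided induction is not confined to $j=\pm 2$ and $V_0[f]$; it occurs at every weight $j\geq 2$ (resp.\ $j\leq -2$), coming from $V_{j-2}[e]=(M_{j-2})_{j-2}$ (resp.\ its dual). Your actual argument already accounts for this, since you only conclude that $C^{f=0}$ is concentrated in weights $\geq 2$ rather than that it vanishes weight by weight.
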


\begin{proof} Assertions \ref{sl2rep1} and~\ref{sl2rep2} readily follow from Lemma~\ref{lem:flek}, applied to $V$ and~$V^*$. In~\ref{sl2rep3}, it is immediate from the fact that $e^iV_{-n}[f] \subset V_{-n+2i}$ that the sum is direct. Further, it is clear from the construction together with~\ref{sl2rep1} and~\ref{sl2rep2} that $M_n \subset V$ is stable under the operators $e$, $h$ and~$f$.

\ref{sl2rep4} It follows from~\ref{sl2rep3} that the given map is an isomorphism of $\Lambda$-modules, and it only remains to be shown that it is $\fsl_2$-equivariant. Using~\eqref{eq:SymnExpl}, compatibility with the operators $e$ and~$h$ is clear, and compatibility with~$f$ follows from the case $\ell=1$ of Lemma~\ref{lem:flek}\ref{flek1}.

\ref{sl2rep5} By Lemma~\ref{lem:homog}, applied to the representation $\oplus M_n$, the kernel $K = \Ker(\phi)$ is homogeneous, i.e., $K = \oplus_{i=-g}^g\; K_i$. Suppose $K \neq 0$. Let $s$ be the smallest index in $\{-g,\ldots,g\}$ with $K_s \neq 0$, and choose $0\neq v\in K_s$. Write $v = \sum_{n=0}^g\; v_n$ with $v_n \in M_n$. By our choice of~$s$ we have $f(v) = 0$, and hence $v_n \in M_n[f]$ for all~$n$. But it follows from Lemma~\ref{lem:flek}, applied to the representation~$M_n$, that $M_n[f] = (M_n)_{-n}$. Hence we must have $s \leq 0$, say $s = -r$, and $v_n = 0$ for all $n \neq r$. This gives that $v = v_r \in (M_r)_{-r} = V_{-r}[f]$. On this space, $\phi$ is just the inclusion map $V_{-r}[f] \hookrightarrow V$, so we get a contradiction, i.e., $\phi$ is injective.

Similarly, the image $\Image(\phi) \subset V$ is homogeneous. Suppose $\phi$ is not surjective. Let $s$ be the smallest index in $\{-g,\ldots,g\}$ with $\Image(\phi)_s \neq V_s$, and choose $v\in V_s\setminus \Image(\phi)_s$. Then $f(v) \in \Image(\phi)_{s-2}$, so we can write $f(v) = \sum w_n$ with $w_n \in M_n$, where the sum runs over the integers $n \in \{0,\ldots,g\}$ with $n\equiv s \bmod{2}$ and $n\geq 2-s$. (In all other cases, $(M_n)_{s-2} = 0$.) By part~\ref{sl2rep4}, we can write~$w_n$ as 
\[
w_n = e^{\frac{s-2+n}{2}}(y_{-n})  
\]
for some $y_{-n} \in (M_n)_{-n} = V_{-n}[f]$. As clearly $V_{-n}[f] \subset \Image(\phi)$, it follows that $w_n \in \Image(\phi)$. Using Lemma~\ref{lem:flek}\ref{flek1}, we find constants $c_n \in \Lambda^*$ such that $c_n \cdot fe(w_n) = c_n \cdot fe^{1+\frac{s-2+n}{2}}(y_{-n})$ equals~$w_n$, for all~$n$. Then $z = \sum_n\; c_n \cdot e(w_n) \in \Image(\phi)$, and $f(v-z) = 0$. But this means that $v-z \in V_s[f]$. If $s\geq 1$ then $V_s[f] = 0$ by Lemma~\ref{lem:flek}\ref{flek3}, so in this case we get $v = z \in \Image(\phi)$, contradiction. If $s = -r \leq 0$ then we find that $v-z \in V_{-r}[f]$, which is contained in the image of~$\phi$. So again in this case it follows that $v = z + (v-z) \in \Image(\phi)$, contradiction.
\end{proof}

\section{The \texorpdfstring{$\fsl_2$}{sl}-action on the Chow ring}\label{sec:sl2}

In all of this section, the same notation and assumptions as in Section~\ref{sec:Fourier} are in force, including the assumption that one of the conditions in~\ref{subsec:charL} is satisfied. Further, we choose a polarization $\theta \colon X\to X^t$, we let $\nu(\theta)$ be the positive integer such that $\deg(\theta) = \nu(\theta)^2$, and we define 
\[
\Lambda_\theta = \Lambda\left[\frac{1}{\nu(\theta)}\right] = \bbZ\left[\frac{1}{\nu(\theta)\cdot (2g+d+1)!}\right]\, .
\]

\subsection{}\label{subsec:ell}
Consider the homomorphism $(\id,\theta) \colon X\to X\times_S X^t$, and define a class $\ell \in \CH^1_{(0)}(X;\Lambda)$ by
\[
\ell = \tfrac{1}{2} \cdot (\id,\theta)^*\bigl(\wp\bigr)\, ,
\]
which lies in~$\CH^1_{(0)}$ because $[n]_{X\times_S X^t}^*(\cP) \cong \cP^{n^2}$ for all~$n$. For instance, if $\theta$ is the polarization given by a symmetric relatively ample line bundle~$L$ on~$X$ then we have $\ell = c_1(L)$. (This explains why we put in the factor~$\frac{1}{2}$.)

For $m\geq 0$ we have $\ell^m \in \CH^m_{(0)}(X;\Lambda)$. In particular, $\ell^g \in \CH^g_{(0)}(X;\Lambda) = \CH_{0,(0)}(X/S;\Lambda) = \Lambda\cdot \bigl[e(S)\bigr]$, and by the standard Riemann--Roch results for abelian varieties we find that
\begin{equation}\label{eq:lg}
\ell^g = \nu(\theta) \cdot g! \cdot \bigl[e(S)\bigr]\, .
\end{equation}
Let us also note that $\pi_*(\ell^m) = 0$ for $m\neq g$ by what was explained in Section~\ref{subsec:idealI}.

\subsection{}\label{subsec:Fexpl}
We will now work with coefficients in the ring~$\Lambda_\theta$. Define $\lambda \in \CH^{g-1}_{(0)}(X;\Lambda_\theta)$ by
\[
\lambda = \frac{\ell^{g-1}}{\nu(\theta) \cdot (g-1)!}\, .
\]
Alternatively, we claim that $\lambda$ is the unique class in $\CH^{g-1}_{(0)}(X;\Lambda_\theta)$ such that $\cF(\ell) = (-1)^{g-1} \cdot \theta_*(\lambda)$. To see this, we can follow the arguments of~\cite{Kunnemann}, Proposition~2.2. Namely, we have
\[
\theta^*\cF\bigl(\exp(\ell)\bigr) = \theta^*\pr_{2,*}\Bigl(\pr_1^*\bigl(\exp(\ell)\bigr) \cdot \exp(\wp) \Bigr) = \theta^*\pr_{2,*}\Bigl(\exp\bigl(\pr_1^*(\ell) + \wp \bigr)\Bigr)\, .
\]
Now use that the diagrams
\[
\begin{tikzcd}
X \times_S X \ar[d,"\pr_2"'] \ar[r,"\id \times \theta"]& X \times_S X^t \ar[d,"\pr_2"] \\
X \ar[r,"\theta"] & X^t 
\end{tikzcd}
\qquad
\begin{tikzcd}
X\times_S X \ar[r,"m"] \ar[d,"\pr_2"'] & X\ar[d,"\pi"] \\
X \ar[r,"\pi"] & S
\end{tikzcd}
\]
are Cartesian, and that $(\id \times \theta)^*\bigl(\wp\bigr) = m^*(\ell) - \pr_1^*(\ell) - \pr_2^*(\ell)$. We then find that
\[
\theta^*\cF\bigl(\exp(\ell)\bigr) = \pr_{2,*}\bigl(\exp(m^*(\ell) - \pr_2^*(\ell))\bigr) = \pr_{2,*}\bigl(\exp(m^*(\ell))\bigr) \cdot \exp(-\ell) = \pi^*\pi_*\bigl(\exp(\ell)\bigr) \cdot \exp(-\ell)\, .
\]
But it follows from what we have seen in Section~\ref{subsec:ell} that $\pi^*\pi_*\bigl(\exp(\ell)\bigr) = \nu(\theta) \cdot [X]$, and because $\theta$ has degree~$\nu(\theta)^2$ we get
\[
\cF\bigl(\exp(\ell)\bigr) = \frac{1}{\nu(\theta)} \cdot \theta_*\bigl(\exp(-\ell)\bigr)\, .
\]
Taking components in codimension~$g-1$ gives the claim.

\begin{theorem}\label{thm:sl2}
Define endomorphisms $e$, $h$ and~$f$ of~$\CH(X;\Lambda_\theta)$ by
\begin{alignat*}{2}
&e(x) = \ell \cdot x &&\text{(intersection product),}\\
&h(x) = (2i-s-g) \cdot x\qquad &&\text{if $x\in \CH^i_{(s)}(X;\Lambda_\theta)$,}\\
&f(x) = \lambda \star x &&\text{(Pontryagin product).}
\end{alignat*}
\begin{enumerate}
\item\label{sl2rep} The operators $e$, $h$ and~$f$ define a representation of the Lie algebra $\fsl_2$ on $\CH(X;\Lambda_\theta)$. Each $\CH^\bullet_{(s)}(X;\Lambda_\theta) = \oplus_i\; \CH^i_{(s)}(X;\Lambda_\theta)$ is a sub-representation. 

\item\label{PrimDec} Let $\St$ denotes the standard $2$-dimensional representation of~$\fsl_2$ over~$\Lambda_\theta$. For every~$s$, define
\[
M_j = \bigl\{x \in \CH^\bullet_{(s)}(X;\Lambda_\theta) \bigm| \text{$h(x) = -j \cdot x$ and $f(x) = 0$}\bigr\}\, .
\]
Then
\[
\CH^\bullet_{(s)}(X;\Lambda_\theta) \cong \bigl(\Sym^0(\St) \otimes_{\Lambda_\theta} M_0\bigr) \oplus \bigl(\Sym^1(\St) \otimes_{\Lambda_\theta} M_1\bigr) \oplus \cdots \oplus \bigl(\Sym^g(\St) \otimes_{\Lambda_\theta} M_g\bigr)
\]
as representations of~$\fsl_2$.
\end{enumerate}
\end{theorem}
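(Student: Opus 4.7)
The plan is to verify Part~(1) by checking the three Lie bracket relations for $e$, $h$, $f$, and then to deduce Part~(2) by applying the structure result Proposition~\ref{prop:sl2rep}. The relations $[h,e] = 2e$ and $[h,f] = -2f$ follow immediately from Theorem~\ref{thm:MainLambda}\ref{bigraded}: since $\ell \in \CH^1_{(0)}(X;\Lambda_\theta)$, intersection with~$\ell$ sends $\CH^i_{(s)}$ to $\CH^{i+1}_{(s)}$, raising the $h$-eigenvalue by~$2$; since $\lambda \in \CH^{g-1}_{(0)} = \CH_{1,(0)}$, Pontryagin with~$\lambda$ sends $\CH_{j,(t)}$ to $\CH_{j+1,(t)}$, lowering the $h$-eigenvalue by~$2$.

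For the remaining relation $[e,f] = h$, the starting point is the theorem of the cube applied to~$\ell$, which gives $m^*\ell = \pr_1^*\ell + \pr_2^*\ell + \Xi$ on $X\times_S X$, with $\Xi := (\id \times \theta)^*\wp$. Substituting this in $e(f(x)) = m_*(m^*\ell \cdot \pr_1^*\lambda \cdot \pr_2^* x)$ and applying the projection formula yields
\[
[e,f](x) = (\ell\cdot \lambda) \star x + \Phi(x)\, ,\qquad \Phi(x) := m_*\bigl(\Xi \cdot \pr_1^*\lambda \cdot \pr_2^* x\bigr)\, .
\]
By~\eqref{eq:lg} we have $\ell\cdot \lambda = g\cdot [e(S)]$, which is the identity for the Pontryagin product, so $(\ell\cdot\lambda)\star x = g\cdot x$; the relation $[e,f] = h$ thus reduces to showing $\Phi(x) = (2i-s-2g)\cdot x$ for every $x\in \CH^i_{(s)}(X;\Lambda_\theta)$.

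For this scalar identification, the plan is to transport~$\Phi$ through the Fourier transform: using the identity $\cF(\ell) = (-1)^{g-1}\theta_*(\lambda)$ from Section~\ref{subsec:Fexpl} together with the exchange of intersection and Pontryagin products under~$\cF$ from Theorem~\ref{thm:MainLambda}\ref{Fa*b}, and combining with the inversion $\cF^t\circ \cF = (-1)^g\cdot [-1]^*$, one rewrites $\Phi$ in terms of operators on $\CH(X^t;\Lambda_\theta)$. The scalar $2i-s-2g$ then emerges from the $h$-eigenvalue on the isomorphic image $\CH^{g-i+s}_{(s)}(X^t;\Lambda_\theta) = \cF\bigl(\CH^i_{(s)}(X;\Lambda_\theta)\bigr)$. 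This step is the main obstacle: over~$\bbQ$ it is essentially K\"unnemann's theorem~\cite{Kunnemann}, but upgrading to $\Lambda_\theta$-coefficients is genuinely delicate, since the difference $[e,f]-h$ is only a~priori known to take values in the $\Lambda_\theta$-torsion submodule of $\CH(X;\Lambda_\theta)$, so vanishing after~$\otimes\bbQ$ is insufficient. The argument must proceed by a direct intersection-theoretic computation of~$\Phi$, with careful verification that every denominator appearing divides $\nu(\theta)\cdot (2g+d+1)!$ and is therefore a unit in~$\Lambda_\theta$; the Pappas-type integrality collected in Section~\ref{subsec:ids} provides the essential tool for this bookkeeping.

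Part~(2) is then a direct application of Proposition~\ref{prop:sl2rep} to the $\fsl_2$-module $V := \CH^\bullet_{(s)}(X;\Lambda_\theta)$, graded by the $h$-eigenspaces $V_j := \CH^{(j+s+g)/2}_{(s)}(X;\Lambda_\theta)$ (with $V_j = 0$ if the index is not an admissible codimension of the correct parity). The bound $|j|\leq g$ follows from~\eqref{eq:srange}, the hypothesis $(2g)!\in\Lambda^*$ required by that proposition is satisfied since $(2g)!$ divides $(2g+d+1)!\in \Lambda_\theta^*$, and the definition of~$M_j$ given in the theorem matches the lowest-weight space $V_{-j}[f]$ of the proposition. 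Assertions~\ref{sl2rep4} and~\ref{sl2rep5} of Proposition~\ref{prop:sl2rep} then yield the stated isotypic decomposition.
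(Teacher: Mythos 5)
Your verification of $[h,e]=2e$ and $[h,f]=-2f$, your reduction of $[e,f]=h$ to the identity $\Phi(x)=(2i-s-2g)\cdot x$ via the theorem of the cube and $\ell\cdot\lambda = g\cdot[e(S)]$, and your treatment of Part~(2) via Proposition~\ref{prop:sl2rep} are all correct and match the paper. But the heart of the matter --- the scalar identity $\Phi(x)=(2i-s-2g)\cdot x$ on $\CH^i_{(s)}(X;\Lambda_\theta)$ --- is exactly where you stop. You correctly diagnose that vanishing of $[e,f]-h$ after $\otimes\,\bbQ$ is insufficient and that ``the argument must proceed by a direct intersection-theoretic computation,'' but you do not carry out that computation, and the tool you point to (the Pappas-type integrality of Section~\ref{subsec:ids}, which concerns Todd classes and Riemann--Roch) is not what controls the denominators here. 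As written, the proposal identifies the obstacle without overcoming it, so Part~(1) has a genuine gap.

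The mechanism the paper actually uses is different and worth noting: it makes K\"unnemann's projector computation integral. One forms the classes $\Gamma_{[n]}\in\CH_0(X^2/X;\Lambda_\theta)$ of the graphs of $[n]$, and uses Corollary~\ref{cor:Istarnu+1} (nilpotence of the augmentation ideal for the Pontryagin product, i.e.\ $(\Gamma_{[1]}-\Gamma_{[0]})^{\star(2g+d+1)}=0$) to define $\log\Gamma_{[1]}$ as a \emph{finite} sum whose denominators $j\leq 2g+d$ are units in $\Lambda$. The resulting projectors $\pi_i=\frac{1}{(2g-i)!}(\log\Gamma_{[1]})^{\star(2g-i)}$ are shown, by a Vandermonde-type argument, to cut out the Beauville decomposition already with $\Lambda$-coefficients; K\"unnemann's proof of $[e,f]=h$ is phrased in terms of these projectors and then goes through verbatim, introducing no further denominators. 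If you want to complete your version, you would either have to reproduce that projector construction or genuinely compute $\Phi$ by hand with explicit denominator bookkeeping --- the former is what the paper does, and the nilpotence of the augmentation ideal (Bloch's theorem with $\Lambda$-coefficients), not integral Grothendieck--Riemann--Roch, is the input that makes it work.
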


\begin{proof}
\ref{sl2rep} We view $\CH(X;\Lambda_\theta)$ as a graded group by declaring the summand $\CH^i_{(s)}(X;\Lambda_\theta)$ to be of degree $2i-s-g$. (Motivically, one would refer to this number as the weight.) By construction, this is the grading that corresponds to the action of~$h$. As it is clear that $e$ and~$f$ are homogeneous of degrees~$+2$ and~$-2$, respectively, we have $[h,e] = 2e$ and $[h,f] = -2f$, and the only thing that remains to be shown is that $[e,f] = h$. For this we can follow the arguments of K\"unnemann in~\cite{Kunnemann} and~\cite{KunnemannMotives}.

We view $X^2 = X\times_S X$ as an abelian scheme over~$X$ via the first projection. For $n\in \bbZ$, let $\Gamma_{[n]} \in \CH_0(X^2/X;\Lambda_\theta)$ be the class of the graph of $[n] \colon X\to X$. Note that $\Gamma_{[m]} \star \Gamma_{[n]} = \Gamma_{[m+n]}$, and that $\Gamma_{[1]}$ is nothing but the class of the diagonal. By Corollary~\ref{cor:Istarnu+1} we have $(\Gamma_{[1]}-\Gamma_{[0]})^{\star (2g+d+1)} = 0$, so that we can define
\begin{equation}\label{eq:logGamma1}
\log \Gamma_{[1]} = \sum_{j=1}^{2g+d}\; (-1)^{j-1} \cdot \frac{(\Gamma_{[1]}-\Gamma_{[0]})^{\star j}}{j} = \sum_{n=0}^{2g+d}\; c_n \cdot \Gamma_{[n]}\quad\text{with}\quad
c_n = \sum_{j=\max\{1,n\}}^{2g+d} \frac{(-1)^{n-1} \binom{j}{n}}{j}\, .
\end{equation}
Then we define correspondences $\pi_i \in \CH_0(X^2/X;\Lambda_\theta)$, for $i=0,\ldots,2g$, by
\[
\pi_i = \frac{1}{(2g-i)!}\cdot \Bigl(\log \Gamma_{[1]}\Bigr)^{\star (2g-i)}\, .
\]
It follows from the last expression in~\eqref{eq:logGamma1} that each $\pi_i$ is a linear combination with coefficients in~$\Lambda_\theta$ of correspondences of the form~$\Gamma_{[n]}$, say
\[
\pi_i = \sum_{n\in \bbZ}\; a_{i,n} \cdot \Gamma_{[n]}
\]
with $a_{i,n} \in \Lambda_\theta$ nonzero for finitely many pairs $(i,n)$.

K\"unnemann shows (\cite{KunnemannMotives}, Section~3) that the $\pi_{i,*}$ are the projectors that give the Beauville decomposition when working with $\bbQ$-coefficients. This implies that for every $i, j \in \{0,\ldots,2g\}$ we have the relations
\[
\sum_{n\in \bbZ}\; a_{i,n} \cdot n^j = \begin{cases} 1 & \text{if $j=2g-i$}\\ 0 & \text{otherwise.} \end{cases}
\]
It is immediate from this that the $\pi_i$ are also the projectors that give the Beauville decomposition with coefficients in~$\Lambda_\theta$, i.e., if $\alpha \in \CH^r_{(s)}(X;\Lambda_\theta)$ then we have
\[
\pi_{i,*}(\alpha) = \begin{cases} \alpha & \text{if $i=r-2s$}\\ 0 & \text{otherwise.} \end{cases}
\]
(For this step everything works with coefficients in~$\Lambda$.) The rest of the proof of the identity $[e,f]=h$ now works exactly the same as in~\cite{Kunnemann}. The last assertion of~\ref{sl2rep} is clear from the definitions. 

\ref{PrimDec} This follows from Proposition~\ref{prop:sl2rep} applied with $V = \CH_{(s)}(X;\Lambda_\theta)$, where we take $V_i = \oplus_{2k-s=g+i}\; \CH^k_{(s)}(X;\Lambda_\theta)$, which is the subspace of $\CH_{(s)}(X;\Lambda_\theta)$ on which $h$ acts as multiplication by~$i$.
\end{proof}

\begin{remark}
In part~\ref{PrimDec} of the theorem, we in fact have $M_j= 0$ if $j\not\equiv g+s\bmod{2}$, since $h$ is multiplication by $2i-s-g$ on~$\CH^i_{(s)}(X;\Lambda_\theta)$.
\end{remark}

\section{An application}\label{sec:applic}

In this section, we give an application of our mains results to torsion classes. We conder a $g$-dimensional abelian variety~$X$ over an algebraically closed field~$L$. (Note that in this case either~\ref{charL=0} or~\ref{lift} in~\ref{subsec:charL} is saitisfied.) As before, we assume $X$ has a polarization~$\theta$ of degree~$\nu(\theta)^2$, and we let $\Lambda_\theta = \bbZ\bigl[\frac{1}{(2g+1)!\, \nu(\theta)}\bigr]$.

\begin{lemma}
With assumption as above, we have an isomorphism of $\Lambda_\theta$-modules
\[
\CH^1_{(1)}(X;\Lambda_\theta) \cong X(L) \otimes \Lambda_\theta\, .
\]
\end{lemma}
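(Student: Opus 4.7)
The plan is to identify $\CH^1_{(1)}(X;\Lambda_\theta)$ first with the antisymmetric part of $\Pic(X)\otimes\Lambda_\theta$, then with $\Pic^0(X)\otimes\Lambda_\theta = X^t(L)\otimes\Lambda_\theta$, and finally to transport this to $X(L)\otimes\Lambda_\theta$ using the polarization~$\theta$.

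For the first step, recall that $\CH^1(X) = \Pic(X)$, and for any $c\in\Pic(X)$ the theorem of the cube gives the classical identity
\[
[n]^*c = \tfrac{n^2+n}{2}\cdot c + \tfrac{n^2-n}{2}\cdot [-1]^*c\, .
\]
Since $2\in\Lambda_\theta^*$, every class in $\Pic(X)\otimes\Lambda_\theta$ decomposes uniquely as $c = c_+ + c_-$ with $c_\pm = \tfrac{1}{2}\bigl(c\pm [-1]^*c\bigr)$, and one computes $[n]^*c_+ = n^2\cdot c_+$ and $[n]^*c_- = n\cdot c_-$; thus $c_+\in\CH^1_{(0)}(X;\Lambda_\theta)$ and $c_-\in\CH^1_{(1)}(X;\Lambda_\theta)$. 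Moreover, the identity $n^{2-s}c = n^2 c_+ + n c_-$ for all $n\in\bbZ$ forces $c=0$ whenever $s\notin\{0,1\}$, ruling out any exotic Beauville components in the admissible range $\max\{1-g,2-2g\}\leq s\leq 1$ of Theorem~\ref{thm:MainLambda}\ref{BeauvDec}. Hence $\CH^1_{(1)}(X;\Lambda_\theta)$ is precisely the antisymmetric summand of $\Pic(X)\otimes\Lambda_\theta$.

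For the second step, consider the short exact sequence
\[
0\tto \Pic^0(X)\tto \Pic(X)\tto \mathrm{NS}(X)\tto 0\, .
\]
Since $[-1]$ acts trivially on $\mathrm{NS}(X)$, the image in $\mathrm{NS}(X)$ of any antisymmetric class is $2$-torsion, hence vanishes after inverting~$2$. As every element of $\Pic^0(X)$ is antisymmetric and $\Lambda_\theta$ is flat over~$\bbZ$, this yields
\[
\CH^1_{(1)}(X;\Lambda_\theta)\cong \Pic^0(X)\otimes\Lambda_\theta = X^t(L)\otimes\Lambda_\theta\, .
\]

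Finally, the polarization $\theta\colon X\to X^t$ is an isogeny of degree~$\nu(\theta)^2$. Over the algebraically closed field~$L$, $\theta$ is surjective on $L$-points with kernel a finite abelian group of order dividing~$\nu(\theta)^2$; since $\nu(\theta)\in\Lambda_\theta^*$ and $\Lambda_\theta$ is flat over~$\bbZ$, tensoring the short exact sequence of $L$-points with~$\Lambda_\theta$ yields an isomorphism $\theta_*\colon X(L)\otimes\Lambda_\theta\isomarrow X^t(L)\otimes\Lambda_\theta$. Composition with the identification from the second step produces the lemma. There is no substantial obstacle; the only mildly delicate point is excluding Beauville components with $s<0$ in $\CH^1$, and this drops out of the theorem of the cube as soon as $2$ is a unit.
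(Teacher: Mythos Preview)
Your proof is correct and follows essentially the same route as the paper's: identify $\CH^1$ with $\Pic(X)$, split into symmetric and antisymmetric parts using $2\in\Lambda_\theta^*$, recognise this as the Beauville decomposition in codimension~$1$, and identify the antisymmetric part with $\Pic^0(X)\otimes\Lambda_\theta = X^t(L)\otimes\Lambda_\theta \cong X(L)\otimes\Lambda_\theta$. You simply fill in more of the details the paper leaves implicit---the theorem of the cube computation, the N\'eron--Severi exact sequence argument, and the reason the polarization induces an isomorphism after inverting~$\nu(\theta)$.
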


\begin{proof}
We have $\CH^1(X) \cong \Pic(X)$. As $2 \in \Lambda_\theta^*$, every element $\alpha \in \Pic(X) \otimes \Lambda_\theta$ can be uniquely written as the sum
\[
\alpha = \frac{\alpha + [-1]^*(\alpha)}{2} + \frac{\alpha - [-1]^*(\alpha)}{2}
\]
of a symmetric element and an anti-symmetric element. This is precisly the Beauville decomposition $\CH^1(X,\Lambda_\theta) = \CH^1_{(0)}(X,\Lambda_\theta) \oplus \CH^1_{(1)}(X,\Lambda_\theta)$. As the anti-symmetric classes correspond to the elements in $\Pic^0(X) \otimes \Lambda_\theta = X^t(L) \otimes \Lambda_\theta \cong X(L) \otimes \Lambda_\theta$, this gives the assertion.
\end{proof}

\begin{corollary}
Let $i \in \{1,\ldots,g\}$. Then $\CH^i_{(1)}(X;\Lambda_\theta)$ contains a submodule that is isomorphic to $X(L) \otimes \Lambda_\theta$. In particular, if $p>2g+1$ is a prime number with $p\neq \mathrm{char}(L)$ and $p\nmid \nu(\theta)$ then $\CH^i_{(1)}(X;\Lambda_\theta)$ contains a subgroup that is isomorphic to $(\bbQ_p/\bbZ_p)^{2g}$.
\end{corollary}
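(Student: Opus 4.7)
The plan is to combine the preceding lemma (which handles $i=1$) with the $\fsl_2$-structure of Theorem~\ref{thm:sl2} to propagate the statement to every $i\in\{1,\ldots,g\}$. Concretely, I would produce an injective $\Lambda_\theta$-linear map
\[
\ell^{i-1}\cdot(-)\colon \CH^1_{(1)}(X;\Lambda_\theta) \hookrightarrow \CH^i_{(1)}(X;\Lambda_\theta)\, ;
\]
combined with the preceding lemma, this yields the required submodule isomorphic to $X(L)\otimes \Lambda_\theta$. That the image lands in $\CH^i_{(1)}$ is immediate from Theorem~\ref{thm:MainLambda}\ref{bigraded}: since $\ell\in\CH^1_{(0)}$, intersection with $\ell^{i-1}$ shifts bidegree $(1,1)$ to $(i,1)$.

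For injectivity I would work inside the $\fsl_2$-module $V=\CH^\bullet_{(1)}(X;\Lambda_\theta)$. Recall that $h$ acts on $\CH^k_{(1)}$ as multiplication by $2k-1-g$, so $\CH^1_{(1)}=V_{-(g-1)}$, and I claim this is the lowest-weight component of $V$. Indeed, a strictly smaller admissible weight would force $k=0$, but $\CH^0_{(1)}=0$ because for $i=0$ and $d=0$ the Beauville range of allowed $s$ is $\{-g,\ldots,0\}$. Consequently $f(V_{-(g-1)})\subseteq V_{-(g+1)}=0$, so $V_{-(g-1)}[f]=\CH^1_{(1)}(X;\Lambda_\theta)$. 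Proposition~\ref{prop:sl2rep}\ref{sl2rep1} applied with $n=g-1$ then asserts that $e^{i-1}=\ell^{i-1}\cdot(-)$ is injective on $V_{-(g-1)}[f]$ for all $i-1\leq g-1$, which is exactly the range $1\leq i\leq g$ appearing in the statement.

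For the second assertion I would argue as follows. Under the hypotheses $p>2g+1$ and $p\nmid \nu(\theta)$, the prime $p$ is not inverted in $\Lambda_\theta=\bbZ\bigl[\tfrac{1}{(2g+1)!\,\nu(\theta)}\bigr]$, so $\Lambda_\theta$ is a localization of $\bbZ$ at a set of integers coprime to $p$. The natural map $X(L)[p^\infty]\to X(L)\otimes_\bbZ\Lambda_\theta$ is therefore injective, because any element killed in the localization would be annihilated by some integer coprime to $p$, which acts invertibly on $p$-primary torsion. Since $L$ is algebraically closed and $\mathrm{char}(L)\neq p$, we have $X(L)[p^\infty]\cong(\bbQ_p/\bbZ_p)^{2g}$, and composition with the embedding $X(L)\otimes \Lambda_\theta\hookrightarrow\CH^i_{(1)}(X;\Lambda_\theta)$ just produced gives the claim.

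I do not foresee a substantial obstacle. The only step requiring care is the combinatorial identification of $\CH^1_{(1)}(X;\Lambda_\theta)$ as the $f$-primitive lowest-weight component of $V$; once this is in place, Proposition~\ref{prop:sl2rep}\ref{sl2rep1} functions as a ready-made hard Lefschetz statement delivering the injectivity of $e^{i-1}$.
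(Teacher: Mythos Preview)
Your proposal is correct and follows essentially the same approach as the paper: identify $\CH^1_{(1)}(X;\Lambda_\theta)$ as the lowest-weight $f$-primitive piece of the $\fsl_2$-module $\CH^\bullet_{(1)}(X;\Lambda_\theta)$, then invoke Proposition~\ref{prop:sl2rep}\ref{sl2rep1} to get injectivity of $e^{i-1}$, and finish by reading off the $p$-primary torsion. Your write-up is in fact more careful than the paper's, which simply asserts that $\CH^1_{(1)}$ coincides with the relevant~$M_n$ without spelling out why $\CH^0_{(1)}=0$.
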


\begin{proof}
For the first assertion, note that $\CH^1_{(1)}(X;\Lambda_\theta)$ is the space~$M_{1-g}$ of Theorem~\ref{thm:sl2}\ref{PrimDec} (taking $s=1$), and then it follows from that result that~$e^{i-1}$ induces an isomorphism of $\CH^1_{(1)}(X;\Lambda_\theta)$ to a subspace of~$\CH^i_{(1)}(X;\Lambda_\theta)$, for $i=1,\ldots,g$. (Cf.\ Proposition~\ref{prop:sl2rep}\ref{sl2rep1}.) The last assertion then follows by looking at the $p$-primary torsion subgroups. (Recall that $L=\overline{L}$.)
\end{proof}

It is nice to contrast this last result to what we know about Chow groups with rational coefficients; for instance, if $L$ is the algebraic closure of a finite field then $\CH^i_{(1)}(X;\bbQ)$ is zero for all~$i$.

{\small

} 
\bigskip

\noindent
\texttt{junaid01@uw.edu}

\noindent
Department of Mathematics, University of Washington, Seattle, WA 98195, USA
\medskip

\noindent
\texttt{hazem.hassan@mail.mcgill.ca}

\noindent
Department of Mathematics and Statistics, McGill University, Montreal, Canada
\medskip

\noindent
\texttt{clin130@jhu.edu}

\noindent
Department of Mathematics, Johns Hopkins University, Baltimore, MD 21218, USA
\medskip

\noindent
\texttt{maniv013@umn.edu}

\noindent
Department of Mathematics, University of Minnesota, Twin Cities, Minneapolis, MN
55455, USA
\medskip

\noindent
\texttt{lily.mcbeath.gr@dartmouth.edu}

\noindent
Department of Mathematics, Dartmouth College, Hanover, NH 03755 USA
\medskip

\noindent
\texttt{b.moonen@science.ru.nl}

\noindent
Radboud University Nijmegen, IMAPP, Nijmegen, The Netherlands


\end{document}